\newcommand{\arxiv}[1]{\href{http://arxiv.org/abs/#1}{\tt
    arXiv:\nolinkurl{#1}}}
\newtheorem{thm}{\bf{Theorem}}[section]
\newtheorem{lemma}[thm]{Lemma}
\newtheorem{cor}[thm]{Corollary}
\newtheorem{prop}[thm]{Proposition}
\newtheorem{conj}[thm]{Conjecture}
\newtheorem{df}[thm]{Definition}
\newtheorem*{theorem-A}{Theorem A}
\newtheorem*{theorem-B}{Theorem B}
\newtheorem*{theorem-C}{Theorem C}
\newtheorem*{theorem-D}{Theorem D}
\newtheorem*{conjecture-A}{Conjecture A}
\newtheorem*{conjecture-B}{Conjecture B}
\newtheorem*{conjecture-C}{Conjecture C}
\newtheorem*{conjecture-D}{Conjecture D}
\theoremstyle{remark} 
\newtheorem{rk}[thm]{Remark}
\newtheorem{eg}[thm]{Example}
\def\bbB{\mathbb{B}}
\def\bbC{\mathbb{C}}
\def\bbG{\mathbb{G}}
\def\bbN{\mathbb{N}}
\def\bbP{\mathbb{P}}
\def\bbQ{\mathbb{Q}}
\def\bbR{\mathbb{R}}
\def\bbS{\mathbb{S}}
\def\bbT{\mathbb{T}}
\def\bbV{\mathbb{V}}
\def\bbZ{\mathbb{Z}}
\def\frakF{\mathfrak{F}}
\def\frakg{\mathfrak{G}}
\def\calA{\mathcal{A}}
\def\calE{\mathcal{E}}
\def\calM{\mathcal{M}}
\def\calN{\mathcal{N}}
\def\calO{\mathcal{O}}
\def\calP{\mathcal{P}}
\def\calS{\mathcal{S}}
\def\calT{\mathcal{T}}
\def\calW{\mathcal{W}}
\def\frake{\mathfrak{e}}
\def\frakg{\mathfrak{g}}
\def\frakh{\mathfrak{h}}
\def\frakl{\mathfrak{l}}
\def\bfe{\mathbf{e}}
\def\bfG{\mathbf{G}}
\def\bfH{\mathbf{H}}
\def\bfI{\mathbf{I}}
\def\bfS{\mathbf{S}}
\def\bfT{\mathbf{T}}
\def\bfV{\mathbf{V}}
\def\simto{\overset{\sim}\to}
\def\v{{\!\vee}}
\def\lam{\lambda}
\def\eW{\tilde{W}}
\def\mod{\operatorname{-mod}\nolimits}
\def\Ad{{{\operatorname{Ad}\nolimits}}}
\def\-{{\operatorname{-}\!}}
\def\Im{\operatorname{Im}\nolimits}
\def\End{\operatorname{End}\nolimits}
\def\Aut{\operatorname{Aut}\nolimits}
\def\Ind{\operatorname{Ind}\nolimits}
\def\Tr{\operatorname{Tr}\nolimits}
\def\Ind{\operatorname{Ind}\nolimits}
\newcommand{\Fl}{{{\mathcal F}\ell}}
\def\Tr{\operatorname{Tr}\nolimits}
\def\ch{\operatorname{ch}\nolimits}
\def\Adm{\operatorname{Adm}\nolimits}
\def\Lie{\operatorname{Lie}\nolimits}
\def\rad{\operatorname{rad}\nolimits}
\def\rot{\operatorname{rot}\nolimits}
\def\Irr{\operatorname{Irr}\nolimits}
\def\SL{\operatorname{SL}\nolimits}
\def\sl{\mathfrak{sl}}
\def\so{\mathfrak{so}}
\def\sp{\mathfrak{sp}}
\def\av{\alpha^{\!\v}}
\numberwithin{itemcounter}{subsection}
\numberwithin{equation}{section}
\title[Modularity and DAHA]{Modularity for $\mathcal{W}$-algebras and affine Springer fibres}
\author{Peng Shan$^{1,2}$}
\author{Dan Xie$^2$}
\author{Wenbin Yan$^1$}
\address{\scriptsize{$^1$} Yau Mathematics Science center, Tsinghua University, Beijing, 100084, China}
\address{\scriptsize{$^2$} Department of Mathematics, Tsinghua University, Beijing, 100084, China}
\begin{document}
\maketitle

\begin{abstract}
We construct a bijection between admissible representations for an affine Lie algebra $\frakg$ at boundary admissible levels and $\bbC^\times$ fixed points in homogeneous elliptic affine Springer fibres for the Langlands dual affine Lie algebra $\frakg^\v$. Using this bijection, we relate the modularity of the characters of admissible representations to Cherednik's Verlinde algebra construction coming from double affine Hecke algebras. Finally, we show that the expected behaviors of simple modules under quantized Drinfeld-Sokolov reductions are compatible with the reductions from affine Springer fibres to affine Spaltenstein varieties. This yields (modulo some conjectures) a similar bijection for irreducible representations of $\calW$-algebras, as well as an interpretation for their modularity properties.
\end{abstract}

\section*{Introduction}

Motivated by the mirror symmetry for 4d $\calN=2$ superconformal field theories compactified on a circle with finite size, we study the relationship between admissible representations for any affine Lie algebra $\frakg$ and $\bbC^\times$-fixed points in the elliptic affine Springer fibres for the Langlands dual affine Lie algebra $\frakg^\v$. 
For physics motivations and background of these results, we refer the readers to \cite{SXY} for details.

Admissible representations were first studied by Kac-Wakimoto \cite{KW89} in view of studying representations of affine Lie algebras whose characters admit modularity properties. These representations are equivalent to representations of the simple quotient $L_k$ of the universal affine vertex algebra of $\frakg$ at the corresponding level by \cite{A1}.

Under the $4d/2d$ duality proposed in \cite{BLLPRvR}, the vertex algebras $L_k$ as well as the $\calW$-algebras obtained from their quantized Drinfeld-Sokolov reductions appear as the image of some $4d$ theories $\calT_k$. Their associated varieties are conjectured to be the Higgs branches of such theories.
%This connection provides a source of inspirations for many recent progresses on representations of $L_k$ and those of their $\calW$-algebras \cite{A1,A2,A3, AvE, D}.
On the other hand side, the Coulomb branch of a (generalized) 4d Class $\calS$ theory is related to the moduli space of Higgs bundles on a Riemann surface \cite{GMN}. For those theory $\calT_k$'s discussed above, the Coulomb branches are related to the moduli spaces of wild Higgs bundles on $\bbP^1$ with a regular singularity at zero and an irregular singularity at infinity. 

A bijection between $\bbC^\times$-fixed points in the moduli space of Higgs bundles and admissible representations for principal $\calW$-algebras in type $A$ was first established in \cite{FN}. In this paper, we establish a similar bijection for arbitrary affine Lie algebra $\frakg$, as well as for their $\calW$-algebras, using elliptic affine Springer fibres. The relationship to Hitchin fibres is explained in Remark \ref{rk:BBAMY}. This bijection provides a new connection between admissible representations and the geometry of affine Springer fibres. We explore the compatibilities of modular group actions through this connection and some other applications.

Let us now explain the main results of the paper. 

For an affine Kac-Moody algebra $\frakg$, we consider its representations at a \emph{boundary principal admissible} level $k$, which is of the form
\[k+h^\v=\frac{h^\v}{u}.\]
Here $h^\v$ is the dual Coxeter number and $u$ is a positive integer coprime both to $h^\v$ and the dual tiers number $r^\v$. Simple admissible highest weight modules are classified by Kac-Wakimoto. Their highest weights are given by
$$\Adm_k=\{x.(k \varpi_0)\mid x\in\eW,\  x(\Pi^\v_u)\subset \Phi^\v_+\},$$
where $\Pi^\v_u=\{(u-1)c+\av_0,\ \av_i,\ \forall i\in \bar I\}$, with $\av_i$ being simple affine roots, $c\in\frakg$ is the central element, $\eW$ is the extended affine Weyl group and $\varpi_0$ is the zeroth fundamental weight.

On the other hand, let $\gamma$ be a homogeneous elliptic element of slope $\frac{u}{h^\v}$ in the Langlands dual Lie algebra $\frakg^\v$, see \eqref{gamma} for a precise definition. The affine Springer fibre $\Fl_\gamma$ inside the affine flag manifold $\Fl$ carries a $\bbC^\times$-action. Let $\Fl_\gamma^{\circ}$ be the neutral connected component.
The fixed points $\Fl^{\circ, \bbC^\times}_\gamma$ are isolated, and can be identified naturally with a subset in $\eW$. 
Then we have a bijection between the fixed points and admissible representations as follows.
\begin{theorem-A}[Theorem \ref{thm:adm}]
The map below is a bijection
\[\Fl_\gamma^{\circ,\bbC^\times}\simeq \Adm_k,\quad w\mapsto w^{-1}.(k \varpi_0).\]
\end{theorem-A}

Now, assume $\frakg$ is nontwisted.
By the work of Kac-Wakimoto \cite{KW89}, the characters for admissible representations admit modular invariance. 
More precisely, there is an action of the modular group $SL(2,\bbZ)$ on the vector space $\bbV$ spanned by the characters of admissible representations. Explicit formulae are given for the matrix coefficients of the $\bbS$ and $\bbT$ matrices.

On the other hand, by Vasserot's geometric realisations of double affine Hecke algebra(=DAHA) $\bfH_{q,t}$ via equivariant $K$-theory of affine Steinberg variety \cite{V}, for $t=q^\kappa$ with $\kappa=-\frac{u}{h^\v}$, the localised equivariant $K$-theory of the affine Springer fibre $\Fl_\gamma^{\bbC^\times}$ provides a finite dimensional module $\bfV$ for DAHA. This particular finite dimensional representation was first studied by Cherednik \cite{Ch}. It has many structures analogous to those of a Verlinde algebra. In particular, Cherednik defined projective actions of the group $PSL^c(2,\bbZ)$ on DAHA and on $\bfV$. 
The matrices $\bfS$ and $\bfT$ can also be described explicitly in terms of specialisations of Macdonald polynomials. We will show that for $q=\exp(2\pi i/\kappa)$, this yields in fact an $SL(2,\bbZ)$-action on $\bfV$ and the following result holds.

\begin{theorem-B}[Theorem \ref{thm:modularity} and Corollary \ref{corSL2}]
For $q=\exp(2\pi i/\kappa)$, there is an isomorphism of vector spaces $\bbV\simto\bfV$, induced by the bijection in Theorem A, which intertwines the $SL(2,\bbZ)$-actions on both sides.
\end{theorem-B}

Finally, we study the compatibility of the results above with the quantized Drinfeld-Sokolov reductions, and obtain the corresponding results for $\calW$-algebras. Let $f$ be a nilpotent element in the finite part $\bar{\frakg}$ of $\frakg$. We assume that it is regular in a Levi subalgebra $\bar\frakl$, and admits a good even grading. The $\calW$-algebra $\calW_k(\bar{\frakg},f)$ is obtained from the quantized Drinfeld-Sokolov reduction of the simple vertex algebra $L_k$. There are conjectural classification of simple highest weight $\calW_k(\bar{\frakg},f)$-modules in terms of the minus-Drinfeld-Sokolov reduction $\Psi_f^-$ of admissible representations, 
which has been proven for principal nilpotent $f$ and some other cases, 
see Conjecture \ref{conj} and the comments after it for more details. This conjecture implies that the set of simple highest weight $\calW_k(\bar{\frakg},f)$-modules $\Irr(\calW_k(\bar{\frakg},f))$ are parametrised by
\[\Adm_k^f=\{ x.(k \varpi_0)\mid x(\Pi^\v_u)\subset \Phi^\v_+\backslash \bar{\Phi}^\v_f\}/\sim_f,\]
where $\bar{\Phi}^\v_f$ are the coroots for $\bar\frakl$. Let $W_f$ be the Weyl group for $\bar\frakl$, the equivalence relation $\sim_f$ is given by $x\sim_f x'$ if $x'\in W_f x$.

On the other hand, let $P^\v_f$ be the parahoric subgroup in the affine Kac-Moody group $G^\v$, whose Levi subgroup has the Lie algebra $\bar\frakl$. We consider the affine Spaltenstein variety $\Fl_\gamma^f$ consisting of elements $g P^\v_f$ such that $\Ad_{g^{-1}}(\gamma)$ belongs to the Lie algebra of the radical of $P^\v_f$. 
Let $\Fl_\gamma^{f,\circ}$ be its neutral component.

\begin{theorem-C}[Theorem \ref{thm:Wbij}]
Assuming Conjecture \ref{conj} holds, then the bijection in Theorem A induces a bijection 
\[(\Fl_\gamma^{f,\circ})^{\bbC^\times}\simeq \Irr(\calW_k(\bar{\frakg},f)).
\]
\end{theorem-C}

As an application, we deduce a formula for the number of simple highest weight $\calW_k(\bar{\frakg},f)$-modules (based on the conjecture), see Corollary \ref{cor:count}.
It is obtained by counting the fixed points in $(\Fl_\gamma^{f,\circ})^{\bbC^\times}$. 
This formula gives a new way to detect easily when $\calW_k(\bar{\frakg},f)$ is zero, or equal to $\bbC$, see Remarks \ref{rk1}, \ref{rk2}.

Finally, in terms of DAHA representations, passing from affine Springer fibre to affine Spaltenstein varieties corresponds to projecting the representation $\bfV$ to $\bfV_f=\bfe_f\bfV$, where $\bfe_f$ is the sign idempotent in the group algebra of $W_f$. We further show that this is compatible with the map from the space of characters $\bbV$ to the space $\bbV_f$ spanned by the characters of $\calW_k(\bar{\frakg},f)$-representations, which is given by the Drinfeld-Sokolov reduction. As a result, we obtain

\begin{theorem-D}[Theorem \ref{thm:modW}]
The isomorphism in Theorem B induces an isomorphism 
\[\bbV_f\simto \bfV_f,\]
which again intertwines the $\SL(2,\bbZ)$-actions on both sides.
\end{theorem-D}

\medskip

\subsection*{Acknowledgements} We would like to thank Tomoyuki Arakawa,  Ivan Cherednik, Gurbir Dhillon, Pavel Etingof and Thomas Haines for helpful discussions.
This work is finished when PS is participating the program ``Arithmetic Quantum Field Theory'' at the Center of Mathematical Sciences and Applications at Harvard University. She would like to thank the organisers and the institute for excellent working environment.

PS is supported by NSFC Grant No.~12225108.
DX is supported by national key research
and development program of China (NO. 2020YFA0713000).
WY is supported by national key research
and development program of China (NO. 2022ZD0117000).
 DX and WY are supported by NSFC with
Grant NO. 12247103.

\iffalse
The four dimensional $\calN=2$ superconformal field theories (4d $\calN=2$ SCFTs) are closely related to many fields of mathematics. For example, the 4d/2d duality \cite{BLLPRvR} associates a conformal vertex operator algebra (VOA) to any 4d $\calN=2$ SCFTs.  On the other hand, the Coulomb branches of a particular class of 4d $\calN=2$ SCFTs called class $\calS$ \footnote{We use the notion of class $\calS$ in the broad sense that it includes the Argyres-Douglas theories, i.e. the corresponding Hitchin system contains both regular and irregular singularities.} are related to moduli spaces of Hitchin systems  \cite{Gai,GMN}. Then one may ask given a theory $\calT$ of class $\calS$ whether the corresponding Coulomb branch $\calM_{\calT}$ and VOA $V_{\calT}$ are related.

Indeed it was shown in \cite{FN, FPYY} that there is a one-to-one correspondence between the $\bbC^\times$ fixed points of the Coulomb branches of $A$-type Argyres-Douglas (AD) theories and the simple modules of the corresponding VOAs. These AD theories are a particular type of class $\calS$ theories whose Coulomb branches are described by moduli spaces of $A_{N-1}$-Higgs bundles on $\bbC\bbP^1$ with an irregular singularity and a regular singularity.

%It was first noticed in \cite{FN} that there is a one-to-one correspondence between the $\bbC^\times$ fixed points of the Coulomb branch of the $(A_{N-1}, A_{M-1})$ Argyres-Douglas (AD) theory and the simple modules of the $W_N(N,M)$ minimal model which is the corresponding VT. The Coulomb branch of $(A_{N-1}, A_{M-1})$ AD theory is described by the moduli space of $SL(N)$-Higgs bundles on $\bbC\bbP^1$ with an irregular singularity at $\infty$ whose singularity condition is specified by $M$. This correspondence was generalized to $(A_1, A_N)$ and $(A_1, D_N)$ AD theories and some other class $\calS$ theories whose corresponding VTs are affine VT of $\hat{sl}_2$ at boundary admissible level $-2+2/u$, Virasoro minimal models, $W_N$ and $B_N$ algebras and etc. Their Coulomb branches are related to moduli spaces of $SL(2)$- or $SL(3)$-Higgs bundles on $\bbC\bbP^1$ with both an irregular singularity and a regular singularity. Moreover, invariants on the Coulomb of certain class $\calS$ theory $\calT$ are also related to the modular properties of characters of simple modules of $V_{\calT}$ (\cite{KSY} $(A_1, A_{2N})$ theories and \cite{GKNPS} for $(A_{N-1},A_{M-1})$ theories).

\fi

\section{Admissible modules and fixed points}

\subsection{Notation}\label{ss:notation}

Let $A=(a_{ij})_{i,j\in I}$ be a generalized Cartan matrix of affine type $\tilde{X}_N^{(r)}$ as listed in Table Aff $r$ of \cite[Section~4.8]{Kac}. Write $I=\{0,1,...,\ell\}$. The number $r$ is called the \emph{tier} number. Let $(a_0,...,a_\ell)$, resp. $(a^\v_0,...,a^\v_\ell)$, be the unique vector of relatively prime positive integers such that
$$(a_0,...,a_\ell) A^t=0, \  \text{resp. } (a^\v_0,...,a^\v_\ell)A=0,$$
where $A^t$ is the transposed matrix.
Recall that $a_0=1$ if $A\neq A^{(2)}_{2\ell}$,  and $a_0=2$ if $A= A^{(2)}_{2\ell}$, while $a^\v_0=1$ in all cases.
The number $h^\v=\sum_{i\in I}a^\v_i$ is the dual Coxeter number of $A$.

Let $\frakg=\frakg(A)$ be the complex affine Lie algebra associated with $A$.
Fix a Cartan subalgebra $\frakh$. It has the form $\frakh=\frakh'\oplus\bbC d$, where $\frakh'$ is an $(\ell+1)$-dimensional vector spaces with a basis given by the set of simple coroots $\Pi^\v=\{\av_0,...,\av_\ell\}$. Let $\Pi=\{\alpha_0,...,\alpha_\ell\}$ be the set of simple roots given by $a_i\alpha_i=a_i^\v\av_i$. The space $\frakh'$ carries a symmetric bilinear form $(\ ,\ )$ such that
$$(\av_i, \av_j)=a_{ij}a_j/a^\v_j\quad \forall i,j\in I.$$ 
The kernel of this form is spanned by the canonical central element
$$c=\sum_{i\in I}a^\v_i\av_i.$$
It extends to a unique nondegenerate symmetric bilinear form on $\frakh$ such that $(d,c)=1$, $(d,d)=0$ and $(d,\alpha^\v_i)=0$ for $i=1,...,\ell$. We use it to identify $\frakh\simeq\frakh^\ast$. The natural pairing $\frakh^\ast\times\frakh\to\bbC$ will be denoted as $\langle\ ,\ \rangle$.

The affine Weyl group $W$ is the subgroup of $\Aut(\frakh')$ generated by the simple reflections $s_i$ given by $s_i(v)=v- (\alpha_i,v)\av_i$ for $v\in\frakh'$ and $i\in I$. The set $\Phi^\v=W(\Pi^\v)$ is the set of \emph{real coroots}. Let $\Phi^\v_+$, resp. $\Phi^\v_-$, be the subsets of positive, resp. negative, real coroots.

Let $\bar{\frakg}$ be the Lie algebra associated with the finite Cartan matrix $A_0=(a_{ij})_{i,j\in \bar I}$, where $\bar I=\{1,...,\ell\}$.
Let $\bar{\frakh}$ be the Cartan subalgebra spanned by $\av_i$ for $i\in \bar I$. Let $\bar \Phi^\v$ be the set of finite coroots, and $\bar{W}=\langle s_i\mid i\in \bar I\rangle$ be the finite Weyl group. Consider the root lattice $ Q=\sum_{i\in \bar I} \bbZ\alpha_i$ and the coroot lattice $ Q^\v=\sum_{i\in \bar I} \bbZ\av_i$.
If $r=1$, then $a_i^\v$ always divides $a_i$, so $ Q\supset Q^\v$.
If $r>1$, then $a_i$ always divides $a_i^\v$ for $i\in \bar I$, so $ Q\subset Q^\v$.

Let $r^\v$ be the tier number of $A^t$. Set
$$M=\begin{cases}  Q, &\text{ if }r^\v=1, \\  Q^\v, &\text{ if }r^\v>1.\end{cases}$$
The affine Weyl group $W$ is a semi-direct product of $\bar{W}$ with $M$. More precisely, 
consider the map $t:\frakh'\to \Aut(\frakh')$ given by $b\mapsto t_b$ with
\begin{equation*}
t_b(h)=h+(h,c)b-\left((h,b)+\frac{1}{2}(h,c)(b,b)\right)c,\quad\forall\ h\in\frakh'.
\end{equation*}
For any subset $N\in \frakh'$, write $t_N=\{t_b\mid b\in N\}$. 
Then $W=\bar{W}\ltimes t_M$.

The extended affine Weyl group $\eW=\bar{W}\ltimes t_{\tilde M}$, where
$\tilde{M}$ is the dual lattice for $ Q+ Q^\v$. Recall that for 
any lattice $L\subset \frakh'$, its dual lattice is $L^\ast=\{x\in\bbQ L\mid (x,L)\subset\bbZ\}$.
Since $( Q,  Q^\v)\subset \bbZ$, we have 
$$M\subset \tilde M=( Q+ Q^\v)^\ast \subset M^\ast$$
as subsets in $\frakh$.
If $r=1$, then $ Q\supset  Q^\v$. In this case $\tilde M$ is the coweight lattice $P^\v$. If $r>1$, then $ Q\subset  Q^\v$, and $\tilde M$ is the weight lattice $P$. For $w\in\eW$, let 
$$\Phi^\v(w)=\Phi^\v_+\cap w^{-1}(\Phi^\v_-).$$ 
Its cardinality is equal to the length $\ell(w)$ of $w$. 
Set $\epsilon(w)=(-1)^{l(w)}$.
Let $\Omega=\{w\in\tilde W\mid w(\Pi^\v)=\Pi^\v\}$.
%The set $\Phi^\v^\v$ is invariant under the action of $t_{\tilde M}$ for 

The fundamental weights $ \varpi_i$ are given by $\langle  \varpi_i,\av_j\rangle=\delta_{ij}$ for $i,j\in I$. Together with the imaginary root $\delta=\sum_{i\in I} a_i\alpha_i$, they form a basis of ${\frakh}^\ast$.
The dual action of $\eW$ on ${\frakh}^\ast$ is given by
$$t_b(\lambda)=\lambda+\lambda(c) b^\ast-\left((\lambda, b^\ast)+\frac{1}{2}\lambda(c)(b^\ast, b^\ast)\right)\delta,\quad\forall b\in\tilde M,\ \lambda\in\frakh^\ast.$$
Here $b^\ast$ is the image of $b$ under the isomorphism $\frakh\simeq \frakh^\ast$.
Let $\rho=\sum_{i\in I} \varpi_i$. Then $\langle \rho, c\rangle=h^\v$.
The dot action for an element $w$ in $\eW$ on $\frakh^\ast$ is $w.\lambda=w(\lambda+\rho)-\rho$ for $\lambda\in \frakh^\ast$.
Let $\bar \varpi_i$, $\bar\rho$ be respectively the image of $ \varpi_i$ and $\rho$ under the natural projection $(\frakh')^\ast\to\bar{\frakh}^\ast$ for $i\in \bar I$.
%We have $\bar\rho\in Q^\v$.

\medskip

\subsection{Principal admissible weights}

%Following \cite[Section 3.1.1]{Ch}, we introduce the following notations. 
\iffalse
For $b\in  P^{+}=\sum_{i=1}^l\bbN \varpi_i$, let $W_0^b\subset W$ be the stabilizer of $b$ in $W$. Let $w_0^b$ be the longest element in $W_0^b$. Set
\begin{equation}
u_b=w_0w_0^b\in W,\qquad \pi_b=t_b(u_b)^{-1}\in \widetilde{W}.
\end{equation}
For $i=1,...,l$, we write $u_i=u_{\varpi_i}$, $\pi_i=\pi_{\varpi_i}$.

Let $\theta^\v$ be the highest root of $\frakg^\v$. Write $\theta^\v=\sum_{i=1}^l a^\v_i\alpha^\v_i$, where $\alpha^\v_i$ are simple roots for $\frakg^\v$.
Let $J=\{i\mid a^\v_i=1\}$.
For each $j\in J$, the element $u_j\in W$ is the unique element which stabilizes $\{\alpha^\v_1,...,\alpha^\v_l, -\theta^\v\}$ with $u_j(-\theta^\v)=\alpha^\v_j$. The set $\Omega=\{\pi_j\mid j\in J\}$ together with the identity form a subgroup of $\widetilde W$. It is isomorphic to $P/Q$ via $\pi_j\mapsto \varpi_j$. We have $\widetilde{W}=\widehat{W}\rtimes \Omega$.

\fi

For $\lam\in \frakh^\ast$, let
$$\Phi^\v_\lambda=\{\av\in \Phi^\v\mid \langle\lambda+\rho, \av\rangle\in\bbZ\}$$
be the set of integral real coroots with respect to $\lambda$. 
\begin{df} \cite{KW89}
A weight $\lambda$ is called \emph{principal admissible} if
\begin{enumerate}
\item $\lambda$ is regular dominant, i.e., $\langle\lambda+\rho, \av\rangle \not\in\{0,-1,-2,...\}$ for all $\av\in \Phi^\v_+$.
\item there exists a linear isomorphism $\varphi: \frakh\to\frakh$ such that 
$\varphi(\Phi^\v_\lambda)=\Phi^\v$.
\end{enumerate}
\end{df}
Such weights are classified by Kac-Wakimoto \cite[Theorem 2.1]{KW89}.
Their level has the following form
\begin{equation*}
k+h^\v=\frac{p}{u},
\end{equation*}
where $u$ and $p$ are positive integers such that $(p,u)=(r^\v, u)=1$ and $p\geqslant h^\v$.
In this paper, we will focus on the \emph{boundary principal admissible level} 
which refers to the case
$p=h^\v$. So from now on, we fix
\begin{equation}\label{df:k}
k+h^\v=\frac{h^\v}{u}\ ,\text{ with }(h^\v,u)=(r^\v, u)=1.
\end{equation}

By loc.~cit (see also \cite[Proposition~2]{KW16}), the set of boundary principal admissible weights consists of the following elements
$$\Adm_k=\{x.(k \varpi_0)\mid x\in\eW,\  x(\Pi^\v_u)\subset \Phi^\v_+\},$$
where $\Pi^\v_u=\{(u-1)c+\av_0,\ \av_i,\ \forall i\in \bar I\}$.
Note that $x.(k \varpi_0)=x'.(k \varpi_0)$ if and only if $x^{-1}x'(\Pi^\v_u)=\Pi^\v_u$.
Let
\begin{align*}
\eW_u =\{x\in \eW\mid x(\Pi^\v_u)\subset \Phi^\v_+\},\quad
\Omega_u=\{x\in \eW\mid x(\Pi^\v_u)=\Pi^\v_u\}.
\end{align*}
Then $\Omega_u$ acts on $\eW_u$ from the right, and we have a bijection
\begin{equation}\label{eq:b0}
\eW_u/\Omega_u\simto \Adm_k,\quad x\mapsto x.(k \varpi_0).
\end{equation}

\iffalse
{\color{red}
\begin{rk}
Using alcove combinatorics, we can check that there is a bijection 
$$\eW_u/\Omega_u\simto \eW/(W\ltimes uM).$$
The right hand side identifies with $\tilde M/uM$.
The action of $\Omega_u$ identifies with translation action of $u\tilde M/uM$.
Thus we obtain in this way a bijection $\eW_u/\Omega_u\simeq \tilde M/u\tilde M$.
In particular, the cardinality of $\Adm_k$ is $u^\ell$. This was proved in \cite[Section~1.5]{FKW}
\end{rk}
}
\fi

\subsection{Non-twisted case}
Assume in this subsection that the affine Lie algebra $\frakg$ is non-twisted. In other words $r=1$.
In this case $M= Q^\v$, and $\tilde{M}= P^\v$.
The set $\eW_u/\Omega_u$ has a more explicit description as follows.
%in $A^{(2)}_{2\ell}$ case, we have $M= Q^\v$ and $\tilde{M}=M^\ast= Q^{\v,\ast}$.
%\cite[Section 3.1.1]{Ch}, 

For any $b\in \tilde{M}$, the element $t_b\in\eW$ admits a unique decomposition
\[t_b=\pi_bu_b \quad \text{ such that }u_b\in \bar{W} \text{ and } \Phi^\v(\pi_b)\cap \bar \Phi^\v=\emptyset.\]
Moreover, $u_b$ is the unique element in $\bar{W}$ such that
\begin{itemize}
\item $b_-=u_b(b)$ belongs to $\tilde{M}_-$,
\item for any $\alpha^\v\in \Phi^\v(u_b)\cap\bar \Phi^\v$, we have $(\alpha^\v,b)\neq 0$. 
\end{itemize} 
See e.g. \cite[Prop.~3.1.2]{Ch}. Here $\tilde{M}_-=\{\lambda\in\tilde{M}\mid (\lambda,\alpha_i^\v)\leqslant 0,\ \forall\ i\in\bar I\}.$

\smallskip

Let $J=\{i\in \bar I\mid a_i=1\}$. The set $\Omega$ admits the following description 
\[\Omega=\{\pi_j:=\pi_{\bar \varpi_j}\mid j\in J\}.\] 
Note that for $j\in J$, the weight $\bar \varpi_j$ is also the $j$-th fundamental coweight. 
In particular, it belongs to $\tilde{M}$.
We have an isomorphism of groups
$$\Omega\simeq \tilde M/M,\quad \pi_j\mapsto \bar \varpi_j+M.$$
Similarly, $\Omega_u=\{\pi_{u\bar \varpi_j}\mid j\in J\}\simeq u\tilde M/uM$.

Now, consider the following set
\begin{equation}\label{eq:sigmau}
\Sigma_u=\left\{b\in \tilde{M} \Bigm\vert \begin{array}{l}\text{either } u+(\theta^\v,b_-)>0,\\ \text{or }u+(\theta^\v,b_-)=0\text{ and }u_b^{-1}(\theta^\v)\in \bar{\Phi}^\v_-\end{array}\right\}.
\end{equation}

\begin{lemma} \label{lem:sigmau}
\begin{itemize}
\item[(a)] We have $\eW_u=\{\pi_b \mid b\in \Sigma_u\}$.

\item[(b)] The action of $\Omega_u$ on $\eW_u$ is given by $\pi_b\pi_{u \bar{ \varpi}_j}=\pi_{b+u_b^{-1}(u\bar{ \varpi}_j)}$.
\end{itemize}
\end{lemma}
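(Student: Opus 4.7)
The plan for part (a) is to use the semidirect product structure $\eW=\bar W\ltimes t_{\tilde M}$ together with the decomposition $t_b=\pi_b u_b$ to reduce the condition $x(\Pi^\v_u)\subset\Phi^\v_+$ to a single check on the element $(u-1)c+\av_0$. First I would write any $x\in\eW$ uniquely as $x=\pi_b v$ with $b\in\tilde M$ and $v\in\bar W$. The defining property $\Phi^\v(\pi_b)\cap\bar\Phi^\v=\emptyset$ implies that $\pi_b$ sends $\bar\Phi^\v_\pm$ into $\Phi^\v_\pm$. Consequently the condition $x(\av_i)=\pi_b(v(\av_i))\in\Phi^\v_+$ for every $i\in\bar I$ is equivalent to $v(\av_i)\in\bar\Phi^\v_+$ for every such $i$, which by the standard length characterisation in $\bar W$ forces $v=e$. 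Thus $x=\pi_b$, and one is reduced to the affine simple coroot.

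Since $\frakg$ is non-twisted, $\av_0=c-\theta^\v$ with $\theta^\v$ the highest finite coroot, so $(u-1)c+\av_0=uc-\theta^\v$. Using $\pi_b=t_b u_b^{-1}$, the translation formula $t_b(\alpha^\v)=\alpha^\v-(\alpha^\v,b)c$ for finite coroots (valid since $(\alpha^\v,c)=0$), $t_b(c)=c$, and the identity $(u_b^{-1}(\theta^\v),b)=(\theta^\v,u_b(b))=(\theta^\v,b_-)$, I would obtain
\[
\pi_b\bigl((u-1)c+\av_0\bigr)=\bigl(u+(\theta^\v,b_-)\bigr)c-u_b^{-1}(\theta^\v).
\]
A real coroot of the form $nc+\gamma^\v$ with $\gamma^\v\in\bar\Phi^\v$ lies in $\Phi^\v_+$ precisely when $n>0$, or $n=0$ and $\gamma^\v\in\bar\Phi^\v_+$. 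Applied with $\gamma^\v=-u_b^{-1}(\theta^\v)$, this reproduces exactly the two clauses defining $\Sigma_u$, completing (a).

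For part (b), the plan is a direct manipulation inside $\eW$. Expanding $\pi_b=t_b u_b^{-1}$ and $\pi_{u\bar\varpi_j}=t_{u\bar\varpi_j}u_{u\bar\varpi_j}^{-1}$, and using the semidirect product relation $w^{-1}t_y=t_{w^{-1}(y)}w^{-1}$ for $w\in\bar W$, one finds
\[
\pi_b\,\pi_{u\bar\varpi_j}=t_{b+u_b^{-1}(u\bar\varpi_j)}\,(u_{u\bar\varpi_j}u_b)^{-1}.
\]
Since $\Omega_u$ fixes $\Pi^\v_u$ as a set, $\eW_u$ is stable under right multiplication by $\Omega_u$, so the left-hand side is of the form $\pi_{b''}$ for a unique $b''\in\Sigma_u$ by (a). Comparing with the unique decomposition $t_{b''}=\pi_{b''}u_{b''}$ and using $\bar W\cap t_{\tilde M}=\{e\}$, I would conclude $b''=b+u_b^{-1}(u\bar\varpi_j)$ (and as a by-product $u_{b''}=u_{u\bar\varpi_j}u_b$).

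The main obstacle is the explicit evaluation of $\pi_b((u-1)c+\av_0)$ in part (a): it requires combining the $\pi_b u_b$ decomposition with the translation formula on affine coroots and invoking the precise characterisation of affine positive real coroots of the form $nc+\gamma^\v$. Once this is settled, both parts reduce to formal manipulations inside $\bar W\ltimes t_{\tilde M}$.
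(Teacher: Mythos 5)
Your proof is correct and follows essentially the same route as the paper: both reduce the condition $x(\Pi^\v_u)\subset\Phi^\v_+$ via the $t_b=\pi_b u_b$ decomposition, handle the finite simple coroots by a positivity argument that pins down the $\bar W$-component, and then read off the two clauses of $\Sigma_u$ from the single affine coroot $uc-\theta^\v$. The only cosmetic difference is that you factor $x=\pi_b v$ and use the abstract property $\Phi^\v(\pi_b)\cap\bar\Phi^\v=\emptyset$ to show $v=e$, whereas the paper works with $x=t_b w$ and recovers $w^{-1}=u_b$ directly from the inequalities; your part (b) is also spelled out more explicitly than the paper's one-line assertion, using part (a) plus the uniqueness of the $\bar W\ltimes t_{\tilde M}$ decomposition, which is a sound way to fill in that step.
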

\begin{proof}
For $b\in \tilde{M}$ and $w\in\bar{W}$, the condition $t_bw(\Pi^\v_u)\subset \Phi^\v_+$ is equivalent to
\begin{align}\label{eq:desc}
\begin{split}
t_bw(\alpha^\v_i)&=w(\alpha^\v_i)-(w(\alpha^\v_i),b)c\in \Phi^\v_+, \ \forall i,\\
t_bw(uc-\theta^\v)&=-w(\theta^\v)+[u+(w(\theta^\v),b)]c\in\Phi^\v_+.
\end{split}
\end{align}
The first line implies that $-(\alpha^\v_i, w^{-1}b)\geqslant 0$ for all $i$, hence $w^{-1}b\in \tilde{M}_-$.
Further, if $(\alpha^\v_i, w^{-1}b)=0$, then $w(\alpha^\v_i)\in\bar{\Phi}^\v_+$. Thus $w^{-1}=u_b$ and $t_bw=t_bu_b^{-1}=\pi_b$.

Now, the condition $t_bw(uc-\theta^\v)\in \Phi^\v_+$ becomes $u+(\theta^\v,b_-)>0$ or $u+(\theta^\v,b_-)=0$ and $u_b^{-1}(\theta^\v)\in \bar{\Phi}^\v_-$. Part (a) follows.

The equality in part (b) follows from the definition.
\end{proof}

We impose the equivalence relation on $\Sigma_u$ given by $b\sim b'$ if $\pi_b$ and $\pi_{b'}$ belong to the same $\Omega_u$-orbit. Combining the previous lemma and \eqref{eq:b0}, we get
\begin{equation}
\Sigma_u/\!\sim\ \simto \Adm_k,\quad b\mapsto \pi_b.(k \varpi_0).
\end{equation}

\smallskip

\subsection{The affine flag manifold}
Now, back to the general situation, we recall some basic facts about the affine flag manifold.

%\cite{OY,VV}.

Let $\frakg^\v$ be the Langlands dual of $\frakg$, which is the affine Kac-Moody algebra whose generalized Cartan matrix is  $A^t$.
Let $G^\v$ be the Kac-Moody group of adjoint type whose Lie algebra is $\frakg^\v$.
It can be realised as $G^\v=G^{\v}_c\rtimes \bbG_m^{\rot}$, where $G^{\v}_c$ is a central extension of a possibly twisted loop group $L\bar{G}^\v$ constructed as follows.

According to Table Aff $r$ of \cite[Section~4.8]{Kac}, the Lie algebra $\frakg^\v$ is of type $\tilde{Y}_N^{(r^\v)}$, and there is an automorphism $\theta$ of the Dynkin diagram of $\tilde{Y}_N$ of order $r^\v$ attached to it. Let $\bbG$ be the connected quasi-simple adjoint algebraic group of type $Y_N$. Let $\mathbb{g}$ be its Lie algebra.
We fix a pinning $(\bbT, \bbB,...)$ for $\bbG$. The automorphism $\theta$ lifts to a pinned automorphism of $\bbG$, which we denote again by $\theta$. The subgroup $\bbG^\theta$ of $\theta$-fixed points is connected. Its Lie algebra is the \emph{Langlands dual} of $\bar\frakg$. The fixed points $\bbT^\theta$ is a maximal torus in $\bbG^\theta$. The root system for $(\bbG^\theta,\bbT^{\theta})$ is Langlands dual to the one for $(\bar\frakg,\bar\frakh)$. The relative Weyl group $N_{\bbG^\theta}(\bbT^{\theta})/\bbT^{\theta}$ is $\bar W$.
Let $\phi$ be the set of absolute roots for $(\bbG,\bbT)$.
Then $\bar\Phi^\v$ identifies with the average of the $\theta$-orbits on $\phi$.

Let $F=\bbC(\!(z)\!)$ be the field of formal Laurent series with coefficient in $\bbC$, and $\calO_F=\mathbb{C}[[z]]$ be the subring of formal power series.
Consider the Galois extension $F_{r^\v}=\bbC(\!(z^{1/r^\v})\!)$ of $F$ with Galois group $\mu_{r^\v}$. Pick $\zeta$ a generator of $\mu_{r^\v}$. Then it induces an automorphism of $\bbG(F_{r^\v})$ denoted again by $\zeta$. 
The homomorphism $\theta$ gives a descent datum of $\bbG\otimes F_{r^\v}$ from $F_{r^\v}$ to $F$. Let $\bar G^\v$ be the result group scheme over $F$. Explicitly, for any $F$-algebra $R$, we have
$$\bar G^\v(R)=\{g\in \bbG(R)\otimes_FF_{r^\v}\mid \zeta(g)=\theta(g)\}.$$
Similar construction with $\bbG$ replaced by $\bbT$ defines a maximal torus $\bar T^\v$ of $\bar G^\v$ over $F$. We also have a special parahoric subgroup of $\bar G^\v$ defined by
$$\bar\bfG^\v(R)=\{g\in \bbG(R)\otimes_{\calO_F}\calO_{F_{r^\v}}\mid \zeta(g)=\theta(g)\},$$
for any ring $R$ over $\calO_{F}$.

The loop group $L\bar G^\v$ is the group ind-scheme over $\bbC$ whose $R$-points are given by $L\bar G^\v(R)=\bar G^\v(R(\!(z)\!))$ for any commutative $\bbC$-algebra $R$. It has a parahoric subgroup $L^+\bar G^\v$ whose $R$ points are given by $\bar\bfG^\v(R[[z]])$. The evaluation map $z\mapsto 0$ induces a group homomorphism $L^+\bar G^\v\to \bbG^\theta$. The Iwahori subgroup $\bfI^\v$ is the preimage of $\bbB^\theta$ under this map.

The loop group $L\bar G^\v$ has a central extension $1\to \bbG_m^{cen}\to G^\v_c\to L\bar G^\v\to 1$, see for example \cite[Section~2.5.2]{OY} and the reference therein. There is a $\bbG^{rot}_m$ action on $G^\v_c$ which commute with $\bbG_m^{cen}$ and acts on $L\bar G^\v$ by $tg(z^{1/r^\v})=g(tz^{1/r^\v})$.
The Kac-Moody group $G^\v=G^\v_c\rtimes \bbG^{rot}_m$. The Iwahori subgroup is $I^\v=\bbG_m^{cen}\times\bfI^\v\rtimes \bbG^{rot}_m$.

The Lie algebra of $L\bar G^\v$ is $\bar\frakg^\v(F)=\{x(z^{1/r^\v})\in\mathbb{g}\otimes F_{r^\v}\mid \theta(x(z^{1/r^\v}))=x(\zeta z^{1/r^\v})\}$. Let $\delta$, $\varpi_0$ be respectively the generators of the cocharacter lattice $X_\ast(\bbG_m^{rot})$ and $X_\ast (\bbG_m^{cen})$.
Then $\frakg^\v=\bar\frakg^\v(F)\oplus\bbC\delta\oplus\bbC\varpi_0$, and $\frakg^{\v,\ast}=\bar\frakg(F)^\ast\oplus\bbC c\oplus\bbC d$.
The Lie algebra of $I^\v$ is the subspace spanned by the root space for roots in $\Phi^\v_+$ and those in $\mathbb{N}\delta$.

The affine flag manifold is the fppf quotient $\Fl=G^\v/I^\v$. It is an ind-scheme whose connected components are naturally parametrised by $\Omega$.
The torus $\bbT^\theta$ acts on $\Fl$ via left multiplication. The fixed points set is naturally in bijection with the extended affine group $\bar W\ltimes X_\ast(\bbT)_{\mu_{r^\v}}$ by \cite[Prop.~13]{HR}. This coincides with the extended affine Weyl group $\eW$ considered in Section \ref{ss:notation}. In other words, we claim there is a natural identification $X_\ast(\bbT)_{\mu_{r^\v}}=\tilde M$.

To view this, we may first replace $\bbG$ by its simply connected cover $\bbG_{sc}$, denote the corresponding maximal torus by $\bbT_{sc}$. Then its enough to show $X_\ast(\bbT_{sc})_{\mu_{r^\v}}=M$.
The adjoint case follows, because $X_\ast(\bbT_{sc})_{\mu_{r^\v}}$ and $X_\ast(\bbT)_{\mu_{r^\v}}$ differ by $\Omega$.
Now, if $r^\v=1$, then $X_\ast(\bbT_{sc})$ is the coroot lattice for $\bar{\frakg}^\v$, which is the root lattice $Q$ for $\bar{\frakg}$. So it equals $M$ in this case. If $r^\v>1$, then $X_\ast(\bbT_{sc})_{\mu_{r^\v}}$ is the lattice 
generated by the echlonnage coroots by \cite[Lemma 15]{HR}. 
By \cite[Cor.~5.3]{H}, these echlonnage coroots identifies with the roots for $\theta$-fixed subgroup in the Langlands dual of $\bbG_{sc}$. But since $\bbG_{sc}$ is simply-laced, they can also be identified with the roots for $\bbG^\theta_{sc}$. The Lie algebra of $\bbG^\theta_{sc}$ is Langlands dual to $\bar\frakg$. So putting these together, 
the echlonnage coroots are identified with the coroots for $\bar\frakg$.
Thus $X_\ast(\bbT)_{\mu_{r^\v}}$ coincides with the coroot lattice $Q^\v$ for $\bar\frakg$, which is $M$ for $r^\v>1$.

\subsection{The affine Springer fibre}
Now, we can discuss the relationships between admissible weights and fixed points in affine Springer fibres.

For each $\alpha^\v\in \Phi^\v$, pick a nonzero root vector $e_{\alpha^\v}$ in $\frakg^\v_{\alpha^\v}$. Consider the element 
\begin{equation}\label{gamma}
\gamma=e_{(u-1)c+\av_0}+\sum_{i=1}^\ell e_{\av_i}\in \frakg^\v.
\end{equation}
Here $u$ is the same integer as the one in \eqref{df:k}.
It is a homogeneous regular elliptic element in the sense of \cite{KL}.
The affine Springer fibre associated with $\gamma$ is the closed subscheme in $\Fl$ whose closed points are given by
$$\Fl_\gamma=\{gI^\v\mid \Ad_{g^{-1}}(\gamma)\in \mathrm{Lie}(I^\v)\},$$
here $\Ad$ denotes the adjoint representation of $G^\v$ on $\frakg^\v$.
It is a finite dimensional projective variety.

Let $\bar\rho$ be the cocharacter of $\bbT$ which is the sum of fundamental coweights for $\bbG$. Its image is fixed by $\theta$.
Consider the one dimensional subtorus
\begin{equation}\label{Cstar}
\nu: \bbC^\times \to \bbT^\theta\times \bbG_m^{\rot}\subset T^\v, \quad t\mapsto \nu(t)=(t^{u\!{\bar \rho}}, t^{h^\v}).
\end{equation}
Then, we have $\Fl^{\bbC^\times}=\Fl^{T^\v}$. 
Indeed, it is clear that $T^\v$-fixed points are $\bbC^\times$-fixed. 
Next, for any $\alpha^\v=\bar\alpha^\v+mc\in\Phi^\v$, the conjugation action of $\bbC^\times$ on $\frakg^\v_{\alpha^\v}$ sends a vector $e_{\alpha^\v}$ to $t^{u\langle \bar\alpha^\v,{\bar \rho}\rangle +mh^\v}e_{\alpha^\v}$. Since $|\langle \bar\alpha^\v,{\bar \rho}\rangle|\leqslant h^\v -1$, we have $u\langle \bar\alpha^\v,{\bar \rho}\rangle +mh^\v>0$. Hence $w$ is the only $\bbC^\times$-fixed point in the Bruhat cell $I^\v wI^\v/I^\v\subset\Fl$. We deduce that $\Fl^{\bbC^\times}=\Fl^{T^\v}$.

Next, for $t\in  \bbC^\times$, we have $\Ad_{\nu(t)^{-1}}(\gamma)=t^u\gamma$.
So $\Fl_\gamma$ is stable under this $\bbC^\times$-action. The fixed points set can be described as follows
\begin{align*}
\Fl_\gamma^{\bbC^\times}&=\Fl_\gamma\cap \Fl^{T^\v}=\{w\in \eW\mid w^{-1}(\Pi^\v_u)\subset \Phi^\v_+\}.
\end{align*}
Thus we obtain a bijection 
\begin{equation}\label{eq:b1}
\eW_u\simeq \Fl_\gamma^{\bbC^\times},\quad w\mapsto w^{-1}.
\end{equation}

\begin{thm}\label{thm:adm}
We have a bijection
\begin{equation}\label{eq:bij}
\Fl_\gamma^{\circ,\bbC^\times}\simeq \Adm_k,\quad w\mapsto w^{-1}.(k \varpi_0).
\end{equation}
\end{thm}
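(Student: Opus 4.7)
The plan is to combine the bijections already in place. By \eqref{eq:b1}, $\Fl_\gamma^{\bbC^\times}\simeq \eW_u$ via $w\mapsto w^{-1}$, and by \eqref{eq:b0}, $\eW_u/\Omega_u\simto \Adm_k$. Recall that the connected components of $\Fl$ are indexed by $\Omega=\eW/W$; under the identification $\Fl^{T^\v}=\eW$, the neutral component $\Fl^\circ$ corresponds to $W\subset\eW$. Consequently \eqref{eq:b1} restricts to
\[
\Fl_\gamma^{\circ,\bbC^\times}\simeq \eW_u\cap W,\qquad w\mapsto w^{-1},
\]
so the theorem reduces to showing that the composition
\[
\eW_u\cap W\hookrightarrow \eW_u\twoheadrightarrow \eW_u/\Omega_u
\]
is a bijection, i.e.\ every right $\Omega_u$-orbit in $\eW_u$ meets $W$ in exactly one point.

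The key step I would isolate as a lemma is that the restriction to $\Omega_u$ of the quotient map $\eW\twoheadrightarrow\Omega=\eW/W$ is a group isomorphism $\Omega_u\simto\Omega$. Under the identification $\Omega\simeq \tilde{M}/M$ (via $\pi_b\mapsto b\bmod M$) together with the analogous identification $\Omega_u\simeq u\tilde{M}/uM$, this restriction is intertwined with the multiplication-by-$u$ map on $\tilde{M}/M$. Hence it is a bijection iff $\gcd(u,|\tilde{M}/M|)=1$. The latter follows from the standing hypothesis $\gcd(u,h^\v)=\gcd(u,r^\v)=1$: a short case check across the affine types shows that every prime divisor of $|\tilde{M}/M|$ divides $h^\v r^\v$.

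Granted the lemma, every $x\in\eW_u$ admits a unique $\omega\in\Omega_u$ such that $x\omega\in W$. Since $\omega$ fixes $\Pi^\v_u$ setwise, the condition $x(\Pi^\v_u)\subset \Phi^\v_+$ is equivalent to $x\omega(\Pi^\v_u)\subset \Phi^\v_+$, so $x\omega\in \eW_u\cap W$. This exhibits the desired bijection $\eW_u\cap W\simto\eW_u/\Omega_u$, and chaining it with \eqref{eq:b0} yields \eqref{eq:bij}. The main obstacle is the arithmetic assertion $\gcd(u,|\Omega|)=1$ underlying the key lemma: while the individual steps are elementary, the statement is not formally implied by $\gcd(u,h^\v)=\gcd(u,r^\v)=1$ and requires either a uniform divisibility statement relating $|\Omega|$ to $h^\v r^\v$ or a Dynkin-by-Dynkin inspection.
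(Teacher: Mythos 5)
Your argument takes a genuinely different route from the paper's. The paper's proof is geometric: it asserts that the affine Springer fibre in the neutral component $\Fl^\circ$ (the simply-connected affine flag variety) is connected, that the connected components of $\Fl_\gamma$ are indexed by $\Omega_u$, and that the induced bijection $\pi_0(\Fl_\gamma)\to\pi_0(\Fl)$ matches the identification $\Omega_u\to\Omega$; the conclusion then comes from the observation that the $\Omega_u$-action on $\Fl_\gamma^{\bbC^\times}$ cuts out one point per component. You instead reduce directly to the group-theoretic fact that the composite $\Omega_u\hookrightarrow\eW\twoheadrightarrow\Omega$ is an isomorphism, which you derive arithmetically from $\gcd(u,|\Omega|)=1$. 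This is a clean and fully combinatorial reformulation of what the paper packages geometrically, and your isolation of the key lemma is useful. The arithmetic claim is correct (and you rightly flag that it currently rests on a Dynkin-by-Dynkin check; the uniform statement is that every prime dividing $|\tilde M/M|$ divides $h^\v r^\v$, which does hold in every type).

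There is, however, one substantive gap you don't flag. You pass from $\Fl_\gamma^{\bbC^\times}\simeq\eW_u$ to $\Fl_\gamma^{\circ,\bbC^\times}\simeq\eW_u\cap W$ by intersecting with the fixed points of $\Fl^\circ$. But the paper defines $\Fl_\gamma^\circ$ as the \emph{neutral connected component of $\Fl_\gamma$}, not as $\Fl_\gamma\cap\Fl^\circ$. These coincide only if $\Fl_\gamma\cap\Fl^\circ$ is connected — precisely the connectedness of the affine Springer fibre in the simply-connected affine flag variety that the paper invokes explicitly. Without this, your argument only shows that the $\bbC^\times$-fixed points of $\Fl_\gamma$ \emph{lying in} $\Fl^\circ$ biject with $\Adm_k$; it does not rule out that some of those fixed points sit in a component of $\Fl_\gamma$ other than $\Fl_\gamma^\circ$. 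So the geometric input has not actually been eliminated: you need to state and use the connectedness fact (a standard result going back to Kazhdan--Lusztig, also quoted in the paper) to convert your combinatorial bijection into the statement of the theorem.
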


\begin{proof}
By \eqref{eq:b0} and \eqref{eq:b1}, the assignment $w\mapsto w^{-1}.(k \varpi_0)$ yields a well defined surjective map
\begin{equation}\label{eq:b2}
\Fl_\gamma^{\bbC^\times}\to \Adm_k.
\end{equation}
The neutral component $\Fl^{\circ}$ is isomorphic to the affine flag manifold associated with the simply-connected group $G^\v_{sc}$. In this case, the affine Springer fibre $\Fl^{\circ}_\gamma$ is connected.
%by \cite[Section 4, Lemma 2]{KL} {\color{red} check twisted case}. 
The connected component of $\Fl_\gamma$ is naturally indexed by $\Omega_u$.
Each of them is contained in a unique connected component of $\Fl$. 
The bijection $\pi_0(\Fl)=\pi_0(\Fl_\gamma)$ obtained in this way matches with the obvious identification $\Omega\to\Omega_u$.
The $\Omega_u$-action on $\Fl_\gamma$ sends fixed points to fixed points, and yields a bijection
\begin{equation*}%\label{eq:b2}
\Omega_u\backslash \Fl_\gamma^{\bbC^\times}\simeq \Fl_\gamma^{\circ,\bbC^\times}.
\end{equation*}
\medskip
So by \eqref{eq:b0}, the map \eqref{eq:b2} is a bijection.
\end{proof}

\begin{rk}\label{rk:BBAMY}
By \cite[Theorem 2.8.1]{BBAMY} the fiber $\Fl_\gamma$ is homeomorphic to a central Lagrangian fibre of the moduli spaces $\calM_\gamma$ of wild Higgs bundles, which corresponds to the Coulomb branch predicted by physics theory. In particular, $\Fl_\gamma$ and $\calM_\gamma$ have the same $\bbC^\times$-fixed points. This gives the precise relationship between our bijection and the one expected in physics theory.
\end{rk}
\medskip

\section{Modularity}

In the rest of this paper, we assume that $\frakg$ is an untwisted affine Lie algebra, so $r=1$.

\subsection{Modular groups}

The modular group $SL(2,\bbZ)$ consists of $2\times 2$ matrices with integer coefficients and determinant one. It is generated by
\begin{equation}\label{eq:sigmatau}
\tau_+=\left(\begin{array}{cc} 1 &1\\ 0 &1\end{array}\right),\quad \sigma=\left(\begin{array}{cc} 0 &-1\\ 1 &0\end{array}\right).
\end{equation}
Let $PSL^c(2,\bbZ)$ be the universal central extension of $PSL(2,\bbZ)$. It is the group generated by $\tau_+$ and $\sigma$ subject to the relation $(\sigma\tau_+)^3=\sigma^2$. We have $SL(2,\bbZ)= PSL^c(2,\bbZ)/\langle \sigma^4-1\rangle$. 

The group $GL(2,\bbZ)=SL(2,\bbZ)\rtimes \langle \varepsilon\rangle$, where $\varepsilon=\left(\begin{array}{cc} 0 &-1\\ -1 &0\end{array}\right)$ has order two. The universal central extension $PGL^c(2,\bbZ)$ is generated by $\varepsilon$ and $\tau_+$, and subject to the Steinberg relation
$$\tau_+\tau_-^{-1}\tau_+=\tau_-^{-1}\tau_+\tau_-^{-1},$$
with $\tau_-=\varepsilon\tau_+\varepsilon$.
We have $PSL^c(2,\bbZ)\hookrightarrow PGL^c(2,\bbZ)$ such that $\sigma\mapsto \tau_+\tau_-^{-1}\tau_+$.

\medskip

\subsection{Modularity of characters}
Let 
\begin{align*}
Y&=\{\lambda\in\frakh\mid\mathrm{Re}(\lambda,c)>0\}\\
&=\{(z,\tau,t):=2\pi i(z+tc-\tau d)\mid z\in\bar\frakh,\ ,\tau,t\in\bbC,\ \Im(\tau)>0\}.
\end{align*}
The modular group $SL(2,\bbZ)$ acts on $Y$ by
\[g\cdot (z,\tau,t)=\left(\frac{z}{c\tau+d}, \frac{a\tau+b}{c\tau+d},t-\frac{c(z,z)}{2(c\tau+d)}\right),\text{ for }g=\left(\begin{array}{cc} a &b\\ c &d\end{array}\right).\]
This induces a right action of $SL(2,\bbZ)$ on the space $\calM(Y)$ of meromorphic functions on $Y$.

For $\lambda\in \frakh^\ast$ of level $k$, let $L(\lambda)$ be the irreducible $\frakg$-module of highest weight $\lambda$.
The \emph{normalized character} of $L(\lambda)$ is defined as
\begin{equation}
\ch_{\lambda}(v)=e^{2\pi i\tau s_{\lambda}}\Tr_{L(\lambda)}(e^v),\quad v\in Y,
\end{equation}
where $s_{\lambda}=\frac{|\lambda+{\bar \rho}|^2}{2(k+h^\v)}-\frac{|{\bar \rho}|^2}{2h^\v}$.
For $\lambda\in\Adm_k$, these characters are linearly independent meromorphic functions on $Y$.
%Kac-Wakimoto \cite{KW89} proved that when $\lambda$ belongs to $\Adm_k$,  the character $\ch_{\lambda}$ satisfies \emph{modular invariance}, i.e., it is a modular form in $\tau$.
Let $\bbV$ be the vector subspace they span in $\calM(Y)$.

Recall that $\Adm_k=\{\pi_b.(k \varpi_0)\mid b\in \Sigma_u/\!\sim\}$.
We abbreviate $\ch_b:=\ch_{\pi_b.(k \varpi_0)}$.
In \cite{KW89}, Kac-Wakimoto proved that $\bbV$ is invariant under the $SL_2(\bbZ)$-action. 
Let $\bbT$, $\bbS$ be respectively the matrices representing $\tau_+$, $\sigma$ on the basis $\{\ch_b\mid b\in\Sigma_u/\sim\}$. Then their matrix coefficients are given by \cite[Theorem 3.6]{KW89} 
\begin{eqnarray}\label{eq:ST}
\begin{split}
\bbS_{b,b'}&=\left|\frac{P^\v}{uh^\v  Q^\v}\right|^{-1/2}\epsilon(u_bu_{b'})\left(\prod_{\alpha\in\bar\Phi^\v_+}2\sin\frac{\pi u(\alpha,{\bar \rho})}{h^\v}\right)
e^{-2\pi i\left(\frac{h^\v}{u}(b,b')+(b+b',{\bar \rho})\right)},\\
\bbT_{b,b'}&=e^{\frac{\pi i u}{h^\v}\left(|u_b^{-1}({\bar \rho})+\frac{h^\v}{u} b|^2-\frac{|{\bar \rho}|^2}{2u}\right)}\delta_{b,b'}.
\end{split}
\end{eqnarray}
%In particular, one can compute the fusion coefficients for admissible representations from these formulae.

\iffalse
\begin{eg}For {$\frakg=\mathfrak{sl}_2$} and $k=-2+\frac{2}{u}$ boundary admissible, elements of  modular matrices $\bbS$ and $\bbT$ are
\begin{equation}
\begin{split}
\bbS_{n,n'}&=
\frac{1}{\sqrt{u}}(-1)^{n+n'+\frac{u-1}{2}}e^{-\frac{2\pi i nn'}{u}},\\
\bbT_{j,j'}&=
e^{\pi i\left(\frac{u}{4}(1-\frac{2n}{u})^2-\frac{1}{8}\right)}\delta_{n,n'},
\end{split}
\end{equation}
with $n=0,1,\cdots,u-1$.
\end{eg}
\fi

\medskip

\subsection{Double affine Hecke algebras}
Our next goal is to interpret the modularity of characters in terms of double affine Hecke algebras(=DAHA).

In this section, we consider the DAHA attached to the root system of $\frakg^\v$. Recall that its underlying finite Lie algebra is $\bar{\frakg}^\v$ with the root system $\bar \Phi^\v=\bar W(\bar\Pi^\v)$. Let $\bar\Phi^\v_s$, resp. $\bar\Phi^\v_l$, be the subsets of short, resp. long roots. Then 
$$\Phi^\v=\{\alpha+nc\mid n\in\bbZ, \alpha\in\bar\Phi^\v_{s}\}\cup \{\alpha+nr^\v c\mid n\in\bbZ, \alpha\in\bar\Phi^\v_{l}\}.$$
Since we have assumed $r=1$, we have $M= Q^\v$, $\tilde{M}= P^\v$, and they are respectively the root and weight lattices for $\bar{\frakg}^\v$.
This agrees with the setup in \cite[Section~3.1.1]{Ch}.

Let $m$ be the least natural number such that $(P^\v,P^\v)=\frac{1}{m}\bbZ$. 
We fix a coefficient ring $\bbC_{q,t}=\bbC[q^{\pm 1/m},t^{\pm 1}]$, where $q,t$ are formal variables.
Let $X_1,...,X_\ell$ be formal variables. For $b\in P^\v$ and $n\in\frac{1}{m}\bbZ$, set
\begin{equation}\label{r1}
X_{b+nc}=q^n\prod_{i=1}^\ell X_i^{n_i},\quad\text{ if }b=\sum_{i=1}^\ell n_i\bar{ \varpi}^\v_i.
\end{equation}
Recall that $\Omega=\{\pi_j\mid j\in J\}$, where $\pi_j=\pi_{\bar{ \varpi}_j}$.
The extended affine Weyl group $\eW=\Omega\ltimes W$ acts on $\bbC[X^{\pm 1}_1,...,X^{\pm 1}_\ell]$ by $w(X_{b+nc})=X_{w(b+nc)}$ for $w\in\eW$.

\begin{df}\cite[Def.~3.2.1]{Ch}
The double affine Hecke algebra $\bfH_{q,t}$ (of equal parameters) attached to the root system of $\frakg^\v$ is the unital $\bbC_{q,t}$-algebra generated by $T_0$, $T_1$,...,$T_{\ell-1}$, $X_1$,...,$X_\ell$ and the group $\Omega$ subject to the relation \eqref{r1} and the relations below:
\begin{enumerate}
\item $(T_i-t^{1/2})(T_i+t^{-1/2})=0$,\ $\forall\ 0\leqslant i\leqslant \ell-1$,
\item $T_iT_{i'}T_i...=T_{i'}T_iT_{i'}...$ with $m_{i,{i'}}$ factors on each side, where $m_{i,i'}$ is the order of $s_is_{i'}$ in the affine Weyl group,
\item $\pi_j T_i\pi_j^{-1}=T_{i'}$, if $\pi_j(\alpha_i)=\alpha_{i'}$ $\forall\ j\in J$,
\item $T_iX_bT_i=X_bX_{\alpha_i}^{-1}$, if $(b,\alpha_i^\v)=1$, $\forall\ 0\leqslant i\leqslant \ell-1$,
\item $T_iX_b=X_bT_i$ if $(b,\alpha_i^\v)=0$, $\forall\ 0\leqslant i\leqslant \ell-1$,
\item $\pi_jX_b\pi_j^{-1}=X_{\pi_j(b)}$, $\forall\ j\in J$.
\end{enumerate}
\end{df}

For $x=\pi_j w\in\tilde W$, with $w$ belonging to the affine Weyl group, pick a reduced expression $w=s_{i_1}...s_{i_n}$, set $T_x=\pi_jT_{i_1}...T_{i_n}$. This element is independent of the choice of reduced expression. Set $Y_i=T_{\bar{ \varpi}^\v_i}$ for $1\leqslant i\leqslant \ell$. Then $Y_i$ are pairwise commuting elements in $\bfH_{q,t}$. For $b=\sum_{i=1}^\ell n_i\bar{ \varpi}^\v_i$, we set $Y_b=\prod_{i=1}^\ell Y_i^{n_i}$.

Let $ \Aut(\bfH_{q,t})$ be the group of $\bbC$-linear automorphisms of $\bfH_{q,t}$.
By \cite[Section.~3.2.2]{Ch}, there is a well defined group automorphism
\begin{equation*}
PGL^c(2, \bbZ)\to \Aut(\bfH_{q,t})
\end{equation*}
which sends the generators $\varepsilon$ and $\tau_+$ of $PGL^c(2, \bbZ)$ to the following automorphisms on $\bfH_{q,t}$:
\begin{align*}
&\varepsilon: X_i\mapsto Y_i,\ Y_i\mapsto X_i,\ T_i\mapsto T_i^{-1},\ \pi_j\mapsto X_jT_{\pi_j^{-1}},\  t\mapsto t^{-1},\ q\mapsto q^{-1},\\
&\tau_+: X_b\mapsto X_b,\ T_0\mapsto X_0^{-1}T_0^{-1},\ T_i\mapsto T_i,\ \pi_j\mapsto q^{-(\bar{ \varpi}_j, \bar{ \varpi}_j)/2}X_j\pi_j,\ t\mapsto t,\ q\mapsto q,
\end{align*}
for all $1\leqslant i\leqslant \ell$, $j\in J$.
Note that the automorphisms in the image of the subgroup $PSL^c(2, \bbZ)$ are $\bbC_{q,t}$-linear.
An $\bfH_{q,t}$-representation $V$ is called \emph{$PSL^c(2, \bbZ)$-invariant} if it carries a projective action of $PSL^c(2, \bbZ)$ such that 
$g\cdot H=g(H)\cdot g$ as elements in $\bbP End(V)$ for any $g\in PSL^c(2, \bbZ)$ and $H\in \bfH_{q,t}$. 
%We will only be interested in $PSL^c(2, \bbZ)$-action. 
%So below, we will only discuss actions of $\tau_+$ and $\sigma$.

As in \cite[(3.2.2)]{Ch}, we will use the $\kappa$-function and write $t=q^\kappa$, and $\bfH_\kappa=\bfH_{q,t}$. Set $\bar\rho_\kappa=\kappa\bar\rho$.
The algebra $\bfH_\kappa$ has a polynomial representation $\calP{ol}=\bbC_{q,t}[X^{\pm 1}_1,...,X^{\pm 1}_\ell]$, see \cite[Thm.~3.2.1]{Ch} for its definition. Let $\ast$ be the involution on $\calP{ol}$ given by $X_b^\ast=X_{-b}$, $t^\ast=t^{-1}$, $q^\ast=q^{-1}$. For $f\in\calP{ol}$, we denote by $\langle f\rangle$ its constant term. The representation $\calP{ol}$ carries a nondegenerate $\ast$-bilinear form given by 
\[\langle f,g\rangle_\circ=\langle fg^\ast\mu_\circ\rangle,\]
where $\mu_\circ=\mu/\langle \mu\rangle$ with
\begin{align*}
\mu&=\prod_{\alpha\in\bar{\Phi}^\v_+}\prod_{i=0}^\infty\frac{(1-X_\alpha q_\alpha^i)(1-X_\alpha^{-1} q_\alpha^{i+1})}{(1-X_\alpha tq_\alpha^i)(1-X_\alpha^{-1} tq_\alpha^{i+1})},\\
\langle\mu\rangle&=\prod_{\alpha\in\bar{\Phi}^\v_+}\prod_{i=0}^\infty\frac{(1-q^{({\bar \rho}_\kappa,\alpha)+i)})^2}{(1-tq^{({\bar \rho}_\kappa,\alpha)+i)})(1-t^{-1}q^{({\bar \rho}_\kappa,\alpha)+i)})},
\end{align*}
where $q_\alpha=q^{(\alpha,\alpha)/2}$, see \cite[(3.3.1), (3.3.2)]{Ch}.

Nonsymmetric Macdonald polynomials $E_b$ indexed by $b\in P^\v$ are elements in $\calP{ol}':=\calP{ol}\otimes_{\bbC_{q,t}}\bbC(q^{\pm 1/m},t)$ subject to the conditions
\[E_b-X_b\in \bigoplus_{b'\succ b}\bbC(q^{\pm 1/m},t) X_{b'},\quad \langle E_b, X_{b'}\rangle_\circ =0\text{ for }b'\succ b.\]
They form an eigenbasis for the action of $Y$-operators on $\calP{ol}'$. More precisely, for any $f\in\bbC[X^{\pm 1}_1,...,X^{\pm 1}_\ell]$, let $L_f=f(Y_1,...,Y_\ell)$ be the corresponding operator in $\bfH_\kappa$. Then
\[L_f(E_b)=f(q^{-b_\sharp})E_b,\quad \text{ where }b_\sharp:=b-u_b^{-1}({\bar \rho}_\kappa) \text{ and } X_a(q^b)=q^{(a,b)},\ \forall\ a,\ b\in P^\v.\]
The renormalised Macdonald polynomials are defined as $\calE_b=E_b/E_b(q^{-{\bar \rho}_\kappa})$ for $b\in P^\v$.

\subsection{The finite dimensional representation $\bfV$}
From now on, we fix 
$$\kappa=-\frac{u}{h^\v}.$$ 
We view $q$ as a complex number, $t=q^\kappa$ and view $\bfH_\kappa$ as an algebra over $\bbC$.

In \cite[Thm.~3.10.6 (ii)]{Ch}, Cherednik showed that $\bfH_\kappa$ has a perfect representation $\frakF'[-{\bar \rho}_\kappa]$. As a vector space, it is the space of complex-valued functions on $\Sigma_u/\!\!\sim$, where $\Sigma_u$ is the set defined in \eqref{eq:sigmau}.
Indeed, since $(\theta^\v, {\bar \rho}_\kappa)+\kappa=-u$, the equivalent classes of weights in our $\Sigma_u$ are precisely those defined in \cite[(3.10.26)]{Ch}. 
Let $\chi_b$ be the characteristic function given by $\chi_b(\pi_{b'})=\delta_{b,b'}$ for $b,b' \in \Sigma_u/\!\!\sim$. As $b$ runs over $\Sigma_u/\!\!\sim$, they provide a basis of $\frakF'[-{\bar \rho}_\kappa]$.
Note that by Lemma \ref{lem:sigmau}, we have $\eW_u/\Omega_u=\Sigma_u/\!\!\sim$.
So we may also view $\frakF'[-{\bar \rho}_\kappa]$ as the space of complex-valued functions on $\eW_u/\Omega_u$.

On the other hand, $\frakF'[-{\bar \rho}_\kappa]$ is the unique simple quotient of the polynomial representation by \cite[Thm.~3.10.2]{Ch}. The form $\langle\ ,\ \rangle_\circ$ descends to $\frakF'[-{\bar \rho}_\kappa]$ and becomes 
\[\langle f, g\rangle_\bullet =\sum_{b\in\Sigma_u/\sim}f(\pi_b)g(\pi_b)^\ast\mu_\bullet(\pi_b),\]
where
\begin{equation}\label{eq:mubullet}
\mu_\bullet(\pi_b)=\prod_{\alpha+nc\in\Phi^\v(\pi_b)}\frac{t_\alpha^{-1/2}-q_\alpha^nt_\alpha^{1/2}X_\alpha(q^{-\bar\rho_\kappa})}{t_\alpha^{1/2}-q_\alpha^nt_\alpha^{-1/2}X_\alpha(q^{-\bar\rho_\kappa})},
\end{equation}
as in \cite[(3.4.6)]{Ch}, with $q_\alpha=q^{(\alpha,\alpha)/2}$, $t_\alpha=q_\alpha^\kappa$, $X_\alpha(q^{-\bar\rho_\kappa})=q^{(\alpha, -\bar\rho_\kappa)}$. The product runs over $\alpha\in\bar\Phi^\v$ and $n\in\bbZ$ such that $\alpha+nc\in\Phi^\v(\pi_b)$.

Let us describe the projective $PSL^c(2, \bbZ)$-action on $\frakF'[-{\bar \rho}_\kappa]$. We will consider a right action which is given by the transpose of the left $PSL^c(2, \bbZ)$-action defined in \cite{Ch}. In other words, the one given by $f\cdot g=g^{-1}\cdot f$ for $g\in PSL^c(2, \bbZ)$ and $f\in \frakF'[-{\bar \rho}_\kappa]$.
It is enough to describe the action of $\sigma$ and $\tau_+$. Note that, our $\sigma$ as in \eqref{eq:sigmatau} is the inverse of the $\sigma$ in \cite[(3.2.14)]{Ch}.

\begin{itemize}
\item \emph{The action of $\sigma$:} The Fourier transform defined in \cite[(3.4.12)]{Ch} is the map 
\begin{equation*}
\varphi_\circ: \calP{ol}\to \frakF'[-{\bar \rho}_\kappa],\quad f\mapsto (\pi_b\mapsto \langle f \calE_b\mu_0\rangle).
\end{equation*}
It factorises through the quotient map $\calP{ol}\to \frakF'[-{\bar \rho}_\kappa]$ and yields an isomorphism
\begin{eqnarray*}
\varphi_\bullet: \frakF'[-{\bar \rho}_\kappa]&\simto& \frakF'[-{\bar \rho}_\kappa],\\
 f&\mapsto& \Big(\pi_b\mapsto \sum_{b'\,\in\Sigma_u/\sim}f(\pi_{b'})\calE_b(\pi_{b'})\mu_\bullet(\pi_{b'})\Big),
\end{eqnarray*}
This map intertwines with the action of $\sigma$ on $\bfH_\kappa$.
In other words, for any $h\in\bfH_\kappa$, we have $\varphi_\bullet(hf)=\sigma(h)\varphi_\bullet(f)$.
So $\varphi_\bullet$ provides the desired $\sigma$-action on $\frakF'[-{\bar \rho}_\kappa]$.
Let $\bfS$ be the corresponding matrix on the basis $\{\chi_b\}$. The matrix coefficients are
$$\bfS_{b,b'}=\calE_{b}(q^{b'_\sharp})\mu_\bullet(q^{b'_\sharp}).$$

\item \emph{The action of $\tau_+$:} 
The action of $\tau_+$ is 
given by multiplication with the inverse of the restricted Gaussian $\gamma_\ast\in \frakF'[-{\bar \rho}_\kappa]$ given by $\gamma_\ast(\pi_b)=q^{\frac{1}{2}|b-u_b^{-1}({\bar \rho}_\kappa)|^2}$, see \cite[Prop.~3.10.1]{Ch}. 
%It differs from loc.~cit.~ by a scalar independent of $b$, which does not matter in the projective representation.
Let $\bfT$ be the matrix for $\tau_+$ on the basis $\{\chi_b\}$. Then the matrix coefficients are
$$\bfT_{b,b'}=q^{-\frac{1}{2}|b-\kappa u_b^{-1}{\bar \rho}|^2}\delta_{b,b'}.$$
\end{itemize}

\subsection{DAHA interpretation for modularity of characters}

Now, we consider the specialisation of the perfect representation $\frakF'[-{\bar \rho}_\kappa]$
at $q=\zeta=e^{-\frac{2\pi i h^\v}{u}}$. 
This is a \emph{good cyclotomic} reduction in the sense of \cite[Section.~3.11.1]{Ch}.
Indeed, recall that $m=1$ for type $B_{2n}$ and $C_n$, $m=2$ for type $D_{2n}$, and $m=|\Omega|$ in the other cases, see e.g. \cite[Section~3.1]{Ch}. We see that $m$ always divides $h^\v$. So $q$ and $q^{1/m}$ are both primitive $u$-th root of unity. Next, since $u$ and $h^\v$ are coprime, the second condition in loc.~cit. is also satisfied with $\tilde{k}=0$ and $N=u$ in the notation there. 

Let $\bfV$ be the specialisation of $\frakF'[-{\bar \rho}_\kappa]$ at $q=\zeta$.
By \cite[Thm.~3.11.1]{Ch}, this representation is well defined and remains perfect.
Note that in this case $t=1$ and the specialisation of $\bfH_\kappa$ becomes the double Weyl group.

\begin{thm}\label{thm:modularity}
The isomorphism of vector spaces
\begin{equation}\label{psi}
a: \bbV\simto \bfV,\quad \ch_b\mapsto\epsilon(u_b)\chi_b
\end{equation}
intertwines the projective $PSL^c(2,\bbZ)$-actions on both sides.
\end{thm}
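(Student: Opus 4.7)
The plan is to verify the intertwining separately on the two generators $\sigma$ and $\tau_+$ of $PSL^c(2,\bbZ)$; the Steinberg relation $(\sigma\tau_+)^3=\sigma^2$ is then automatically preserved on both sides. In terms of the matrices $\bbM^g$ (of $g$ on $\bbV$ in the basis $\{\ch_b\}$) and $\bfM^g$ (of $g$ on $\bfV$ in the basis $\{\chi_b\}$), the map $a(\ch_b)=\epsilon(u_b)\chi_b$ intertwines the projective action of a generator $g$ if and only if there is a global scalar $\lambda_g$ with
\[\bbM^g_{b,b'} \;=\; \lambda_g\,\epsilon(u_b)\epsilon(u_{b'})\,\bfM^g_{b,b'}.\]

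For $g=\tau_+$ both matrices are diagonal, so the $\epsilon$-factors drop out. I would substitute $\kappa=-u/h^\v$ and $q=\zeta=e^{-2\pi i h^\v/u}$ into the formulas for $\bbT_{b,b}$ and $\bfT_{b,b}$, and use the identity $|u_b^{-1}\bar\rho+(h^\v/u)b|^2=(h^\v/u)^2|b-\kappa u_b^{-1}\bar\rho|^2$; after expansion the exponents differ only by the $b$-independent constant $-\pi i|\bar\rho|^2/(2h^\v)$, which identifies $\lambda_{\tau_+}$.

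For $g=\sigma$ the main computation is to evaluate $\bfS_{b,b'}=\calE_b(q^{b'_\sharp})\mu_\bullet(\pi_{b'})$ at $(q,t)=(\zeta,1)$ and to compare with the Kac-Wakimoto formula \eqref{eq:ST}. At $t=1$ the $Y$-operators of $\bfH_\kappa$ become the lattice translations of the affine Weyl group, so the nonsymmetric Macdonald polynomial $E_b$ degenerates to the monomial $X_b$, and using $\bar\rho_\kappa=-(u/h^\v)\bar\rho$ I obtain
\[\calE_b(q^{b'_\sharp}) \;=\; e^{-2\pi i h^\v(b,b')/u}\cdot e^{2\pi i(b,\bar\rho)}\cdot e^{-2\pi i(u_{b'}(b),\bar\rho)}.\]
The remaining factor $\mu_\bullet(\pi_{b'})$, which is identically $1$ at $t=1$ for generic $q$, must be computed at $q=\zeta$ as a limit: each factor in \eqref{eq:mubullet} indexed by $\alpha+nc\in\Phi^\v(\pi_{b'})$ is regular and tends to $1$ unless $n\in u\bbZ$, in which case both numerator and denominator vanish at the specialization. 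I would apply l'H\^opital (or, equivalently, the Macdonald--Morris type evaluation of \cite[Sec.~3.4, 3.11]{Ch}) to these finitely many $0/0$ factors to extract the sine product $\prod_{\alpha\in\bar\Phi^\v_+}2\sin(\pi u(\alpha,\bar\rho)/h^\v)$, the normalisation $|P^\v/uh^\v Q^\v|^{-1/2}$, the sign $\epsilon(u_b u_{b'})$, and an exponential factor which, together with $\calE_b(q^{b'_\sharp})$, assembles into the Kac-Wakimoto phase $e^{-2\pi i(h^\v(b,b')/u+(b+b',\bar\rho))}$. This determines $\lambda_\sigma$.

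The main obstacle is this last step: the explicit evaluation of the $0/0$ limits in $\mu_\bullet(\pi_{b'})$ at the root-of-unity specialisation. One must identify precisely which factors become indeterminate, extract the correct constants from the limits via Macdonald--Morris type identities, and most delicately track the sign $\epsilon(u_{b'})$ produced by the product of these limits, aligning it with the length function on $\Phi^\v(\pi_{b'})$. Once this is carried out, both $\sigma$ and $\tau_+$ intertwine projectively, and the compatibility then propagates to every element of $PSL^c(2,\bbZ)$.
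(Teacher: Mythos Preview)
Your treatment of $\tau_+$ is fine and matches the paper. The problem is entirely in your plan for $\sigma$: the l'H\^opital programme for $\mu_\bullet$ is a wrong turn, and the step you are missing lies elsewhere.

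First, there are no $0/0$ factors. At $t=q^\kappa$ and $q=\zeta$ one has $t_\alpha=1$, so each factor in \eqref{eq:mubullet} equals $(1-A)/(1-A)$ with $A=q_\alpha^n X_\alpha(q^{-\bar\rho_\kappa})$, and this is harmless unless $A=1$. But $A=1$ at $q=\zeta$ forces $n$ to be a multiple of $u$, while the index set $\Phi^\v(\pi_{b'})$ contains no finite coroots (this is exactly the defining property $\Phi^\v(\pi_{b'})\cap\bar\Phi^\v=\emptyset$ of $\pi_{b'}$) and, for $b'\in\Sigma_u$, no affine coroot with $|n|\ge u$ either. Hence $\mu_\bullet(\pi_{b'})|_{q=\zeta}=1$ on the nose; there is nothing to extract from it.

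Second, even conceptually your bookkeeping is inconsistent. You correctly set up the intertwining condition $\bbS_{b,b'}=\lambda_\sigma\,\epsilon(u_b)\epsilon(u_{b'})\,\bfS_{b,b'}$, so the signs $\epsilon(u_b)\epsilon(u_{b'})$ are already supplied by the map $a$; they must \emph{not} reappear inside $\bfS$. Likewise the sine product and the factor $|P^\v/uh^\v Q^\v|^{-1/2}$ in \eqref{eq:ST} are $b,b'$-independent and simply constitute $\lambda_\sigma$. Note also that $\mu_\bullet(\pi_{b'})$ does not depend on $b$ at all, so it cannot possibly produce $\epsilon(u_b)$.

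What actually remains, once $\mu_\bullet=1$, is to show that the $u_b,u_{b'}$-dependent phases in $\bfS_{b,b'}=\calE_b(q^{b'_\sharp})$ are trivial. The paper does this by a short lattice argument: writing the exponent as $(-u_b^{-1}\bar\rho_\kappa-u_{b'}^{-1}\bar\rho_\kappa,b)+(b',b)$, one observes that $-u_b^{-1}\bar\rho-u_{b'}^{-1}\bar\rho\in Q^\v$ (since $2\bar\rho$ and each $\bar\rho-w\bar\rho$ lie in $Q^\v\subset Q$) and $(Q^\v,P^\v)\subset\bbZ$, whence $\zeta^{\kappa\cdot(\text{integer})}=1$. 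This yields $\bfS_{b,b'}|_{q=\zeta}=\zeta^{(b',b)}$, and comparison with \eqref{eq:ST} gives $\bbS_{b,b'}=a\,\epsilon(u_b)\epsilon(u_{b'})\,\bfS_{b,b'}$ with $a$ the constant sine-product factor. This lattice step is the genuine content of the $\sigma$-computation and is absent from your outline.
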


\begin{proof}
First, note that 
\begin{equation*}
\bfT|_{q=\zeta} = e^{\frac{\pi i|{\bar \rho}|^2}{2h\!^\v}}\bbT.
\end{equation*}
Next, by \eqref{eq:mubullet} we see that in this specialization $\mu_\bullet|_{q=\zeta}=1$.
We have $E_b(q^{-{\bar \rho}_\kappa})|_{q=\zeta}=\zeta^{({\bar \rho}_\kappa,b_-)}$ by \cite[(3.3.16), (3.3.17)]{Ch}, and
\begin{equation*}
\calE_b(X)|_{q=\zeta}=\zeta^{-({\bar \rho}_\kappa,b_-)}X_b.
\end{equation*}
Therefore
\begin{align*}
\bfS_{b,b'}|_{q=\zeta}&=\calE_b(q^{b'_\sharp})|_{q=\zeta}=\zeta^{-({\bar \rho}_\kappa,b_-)+(b'_\sharp, b)}
\end{align*}
Recall that $b_-=u_b(b)$ and $b'_\sharp=b'-u_{b'}^{-1}({\bar \rho}_\kappa)$. So 
$$-({\bar \rho}_\kappa,b_-)+(b'_\sharp, b)=(-u_b^{-1}({\bar \rho}_\kappa)-u_{b'}^{-1}({\bar \rho}_\kappa),b)+(b',b).$$
Note that $-u_b^{-1}({\bar \rho})-u_{b'}^{-1}({\bar \rho})\in Q^\v$ with $b\in P^\v$.
We are in the case $r=1$, so $Q^\v\subset Q$ and $(Q^\v,P^\v)\subset\bbZ$.
We deduce that $(-u_b^{-1}({\bar \rho}_\kappa)-u_{b'}^{-1}({\bar \rho}_\kappa),b)\in\kappa\bbZ$. Thus $\zeta^{(-u_b^{-1}({\bar \rho}_\kappa)-u_{b'}^{-1}({\bar \rho}_\kappa),b)}=1$. Therefore 
$$\bfS_{b,b'}|_{q=\zeta}=\zeta^{(b',b)}.$$
Then by \eqref{eq:ST}, we obtain
$$\bbS_{b,b'}=a\epsilon(u_bu_{b'})\bfS_{b,b'}|_{q=\zeta},\quad\text{ where }
a=\left|\frac{P^\v}{uh^\v  Q^\v}\right|^{-1/2}\left(\prod_{\alpha\in\bar\Phi^\v_+}2\sin\frac{\pi u(\alpha,{\bar \rho})}{h^\v}\right).$$
By consequence, the isomorphism $\bbP\End(\bbV)\simeq \bbP\End(\bfV)$ induced by $\bbV\simto \bfV$, $\ch_b\mapsto\epsilon(u_b)\chi_b$, sends $\bbS$, $\bbT$ to $\bfS$, $\bfT$ respectively.
\end{proof}

\begin{cor}\label{corSL2}
The projective action of $PSL^c(2,\bbZ)$ on $\bfV$ can be lifted to an $SL(2,\bbZ)$-action.
\end{cor}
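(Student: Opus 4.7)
The plan is to transport the honest $SL(2,\bbZ)$-action on $\bbV$ (provided by Kac--Wakimoto \cite{KW89}) to $\bfV$ through the vector space isomorphism $a$ of Theorem \ref{thm:modularity}, and then to recognize the transported operators as scalar multiples of $\bfS|_{q=\zeta}$ and $\bfT|_{q=\zeta}$.

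First I would recall that the Kac--Wakimoto matrices $\bbS$ and $\bbT$ of \eqref{eq:ST} satisfy the defining relations $(\bbS\bbT)^3=\bbS^2$ and $\bbS^4=\bfI$ of $SL(2,\bbZ)$ on the nose, not merely projectively. Next, I would compute the conjugates $\bfS':=a\bbS a^{-1}$ and $\bfT':=a\bbT a^{-1}$ as endomorphisms of $\bfV$. Under the identification of the bases $\{\ch_b\}$ and $\{\chi_b\}$, the map $a$ is the diagonal operator with entries $\epsilon(u_b)$, so conjugation by $a$ multiplies the $(b,b')$-entry of any matrix by $\epsilon(u_b)\epsilon(u_{b'})$. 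Applied to the identity
\[\bbS_{b,b'}=C\,\epsilon(u_b u_{b'})\,\bfS_{b,b'}|_{q=\zeta}\]
established in the proof of Theorem \ref{thm:modularity} (with $C$ the global scalar appearing there), the sign factors cancel since $\epsilon(u_b)^2=1$, yielding $\bfS'=C\cdot\bfS|_{q=\zeta}$. Since $\bbT$ is diagonal, conjugation by $a$ leaves it unchanged, and a parallel calculation gives $\bfT'=e^{\pi i|{\bar \rho}|^2/(2h^\v)}\cdot\bfT|_{q=\zeta}$.

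Finally, the pair $(\bfS',\bfT')$ consists of nonzero scalar multiples of $\bfS|_{q=\zeta}$ and $\bfT|_{q=\zeta}$, hence represents the same elements $\sigma$ and $\tau_+$ in $\bbP\End(\bfV)$ as Cherednik's projective action. At the same time, conjugation by $a$ is an algebra isomorphism, so the $SL(2,\bbZ)$-relations satisfied by $(\bbS,\bbT)$ transfer verbatim to $(\bfS',\bfT')$. This produces the required lift of the projective $PSL^c(2,\bbZ)$-action on $\bfV$ to an honest $SL(2,\bbZ)$-action. No genuine obstacle is anticipated: the corollary is essentially an unwinding of the intertwining property established in Theorem \ref{thm:modularity}, combined with the fact that $\bbV$ carries an honest $SL(2,\bbZ)$-action to begin with.
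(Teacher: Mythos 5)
Your proposal is correct and takes essentially the same route the paper intends: Theorem~\ref{thm:modularity} says the isomorphism $a$ intertwines the projective actions, and since $\bbV$ carries an honest $SL(2,\bbZ)$-action (Kac--Wakimoto), transporting it through $a$ gives the required lift on $\bfV$. One small slip: from $\bfT|_{q=\zeta}=e^{\pi i|\bar\rho|^2/(2h^\v)}\bbT$ and $a\bbT a^{-1}=\bbT$ you should get $\bfT'=e^{-\pi i|\bar\rho|^2/(2h^\v)}\cdot\bfT|_{q=\zeta}$, not the opposite sign; this does not affect the argument since all that matters is that $\bfT'$ is a nonzero scalar multiple of $\bfT|_{q=\zeta}$.
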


\begin{rk}
A standard computation shows that $a^2=u^l$.
\end{rk}

\iffalse
Recall from Theorem \ref{thm:adm}, in the boundary admissible case, we have a bijection $(P^\v/u Q^\v)/\Omega\simeq \Adm_k$. So we identify elements in $\Adm_k$ with those in $(P^\v/u Q^\v)/\Omega=P^\v/u P^\v$.

\begin{lemma}
We have 
$$
\bfS^2_{(b,b'')}=\begin{cases} u^l, &\text{ if }b+b''\in uP^\v,\\
0, &\text {otherwise.}
\end{cases}$$
\end{lemma}

\begin{proof}
By definition 
$$\bfS^2_{(b,b'')}=\sum_{b'\in P^\v/u P^\v}\bfS_{(b,b')}\bfS_{(b',b'')}=\sum_{b'\in P^\v/u P^\v}q^{(b', b+b'')}.$$
Write $b'=\sum_{i=1}^l a_i \varpi_i^\v$, $b+b''=\sum_{i=1}^l c_i\alpha_i^\v$, with $c_i\in \frac{1}{m}\bbZ$.
Then $q^{(b',b+b'')}=\prod_{i=1}^lq^{a_ic_i}$, and
\begin{equation}\label{eq:sum}
\sum_{b'\in P^\v/uP^\v}q^{(b',b+b'')}=\sum_{a_i=0}^{u-1}\prod_{i=1}^lq^{a_ic_i}.
\end{equation}
Recall that $q=\exp(\frac{2\pi i h}{u})$ is a primitive $u$-th root of unity, because $(u,h)=1$. 
For $c_i=n_i/m$, since $m$ divides $h$, if $u$ does not divides $n_i$ for some $i$, then $q^{c_i}$ is a $u$-th roots of unity which is not equal to $1$, thus $\sum_{a_i=0}^{u-1}q^{a_ic_i}=0$. So the right hand side of \eqref{eq:sum} is zero.
By consequence, $\bfS^2_{(b,b'')}$ is not zero only if $u$ divides $n_i$ for all $i$. In other words, only if $b+b''\in u P^\v$. In this case, $q^{c_i}=1$, and $\sum_{a_i=0}^{u-1}q^{a_ic_i}=u$. The right hand side of \eqref{eq:sum} is $u^l$.
\end{proof}

\fi

\medskip

\section{$\calW$-algebras}
As in the previous section, we assume $r=1$.

\subsection{Representations}
Let $\bar{\frakg}$ be a finite dimensional simple Lie algebra.
Let $f$ be a nilpotent element in $\bar{\frakg}$. 
We assume that $f$ admits a good even grading\footnote{This assumption can be weaken by considering Ramond twisted modules.}
$$\bar{\frakg}=\bigoplus_{j\in\bbZ}\bar{\frakg}_j.$$
Without loss of generality, we may assume $\bar{\frakh}\subset \bar{\frakg}_0$ and the root system is compatible with the grading, i.e., each root space is contained in some $\bar\frakg_j$.

%Choose an $\mathfrak{sl}_2$-triple $(e,h,f)$, then the eigenspace decomposition for the adjoint action of $h$ on $\bar{\frakg}$ yields a grading
%$$\bar{\frakg}=\bigoplus_{j\in\bbZ}\bar{\frakg}_j.$$
%This gives a partition on $\bar{\Phi^\v}=\bigsqcup_{j\in\bbZ}\bar{\Phi^\v}^j$.
%This grading is called the \emph{Dynkin} grading.
%We say it is \emph{even} if $\bar{\frakg}_j=0$ for $j$ odd.

We assume also that the nilpotent element $f$ is of \emph{standard Levi type}, i.e., there is a Levi subalgebra $\bar{\frakl}$ of $\bar{\frakg}$ such that $f$ is a regular nilpotent element in $\bar{\frakl}$.
The Levi subalgebra $\bar{\frakl}$ can be realised as the centraliser of
$$\bar{\frakh}^f=\{v\in\bar{\frakh}\mid [f,v]=0\}.$$
The root system for $\bar{\frakl}$ consists of finite roots of $\bar\frakg$ which vanishes on ${\bar{\frakh}}^f$. Let $\bar{\Phi}^\v_f$ be the corresponding set of coroots, and denote by $W_f$ the corresponding Weyl group.
%We have $W_f=\{w\in W\mid w|_{\bbC d\oplus\bar\frakh^f}=1\}$.

Let $V^k(\bar\frakg)=U(\frakg)\otimes_{U(\bar\frakg[z]\oplus\bbC c\oplus \bbC d)}\bbC_k$ be the universal affine Vertex algebra of level $k$, where $\bar\frakg[z]\oplus\bbC d$ acts trivially on $\bbC_k$ and $c$ acts by the scalar $k$. Let $L_k(\bar\frakg)$ be its simple quotient.
Assume that $k$ is a boundary principal admissible level.
We consider the category of $L_k(\bar\frakg)$-modules in the 
category $\calO$ of the affine Lie algebra $\frakg$, denoted by $L_k(\bar\frakg)\mod$. By \cite{A1}, it is a semi-simple category with simple objects given by the simple highest weight modules $L(\lambda)$ with $\lambda\in\Adm_k$.

The quantized Drinfeld-Sokolov reduction functor for an arbitrary $f$ was introduced by Kac-Roan-Wakimoto \cite{KRW}.
There are two versions of reduction functors $\Psi^{\pm}_f$. The $W$-algebra is the vertex algebra obtained by $\calW_k(\bar\frakg,f)=\Psi^+\big(L_k(\bar\frakg)\big)$. 
Both reductions provide functors
\[\Psi^{\pm}_f:  L_k(\bar\frakg)\mod\to \calW_k(\bar\frakg,f)\mod.\]
%where the target category consists of {\color{red} Ramond twisted} $\calW_k(\bar\frakg,f)$-modules.
The functor $\Psi^-_f$ is expected to satisfy the following properties.

\begin{conj}[Kac-Roan-Wakimoto, Arakawa]\label{conj}
Assume $k$ is a boundary principal admissible level and $f$ is regular in the Levi subalgebra $\bar\frakl$.
The following holds:
\begin{enumerate}
\item The functor $\Psi^-_f$ is exact. The image of a simple module is either simple or zero.
\item For $\lambda\in\Adm_k$, the module $\Psi^-_f(L(\lambda))$ is nonzero if and only if 
$$\Phi^\v_{\lambda,+}\subset \Phi^\v_+\backslash \bar{\Phi}^\v_f.$$
\item For $\lambda,\lambda'\in\Adm_k$, we have $\Psi^-_f(L(\lambda))\simeq \Psi^-_f(L(\lambda'))$ if and only if $\lambda=x.\lambda'$ for some $x\in W_f$.
\end{enumerate}
\end{conj}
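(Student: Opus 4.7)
The plan is to adapt Arakawa's strategy from the principal case, approaching $\Psi^-_f$ via its BRST realization. Write
\[
\Psi^-_f(M) = H^\bullet_{DS,-}(M) = H^\bullet(C(M)),
\]
where $C(M)$ is the Kac-Roan-Wakimoto complex built from $M$ tensored with charged ghost and neutral free fermion systems indexed by the good even grading $\bar\frakg=\bigoplus_j\bar\frakg_j$, with the minus twist in the differential that selects the regular nilpotent $f$ inside $\bar\frakl$. The point is that for the \emph{plus} reduction the analogous statements are theorems of Arakawa, and one wants to transport that information across a duality between $+$ and $-$ reductions (e.g.\ through the Feigin-Frenkel / Feigin-Semikhatov comparison or a contragredient-duality argument at the chain level).

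For part (1), the goal is to prove that $H^i(C(L(\lambda)))=0$ for $i\neq 0$ and every $\lambda\in\Adm_k$; exactness on $L_k(\bar\frakg)\mod$ then follows from semisimplicity of this category \cite{A1}. Filter $C(L(\lambda))$ by the good grading so that the $E_1$-page of the associated spectral sequence is a Koszul-type complex for the nilradical of the parabolic attached to $\bar\frakl$; compute it by invoking a BGG-type resolution of $L(\lambda)$ in a suitable parabolic category $\calO$, using that for boundary principal admissible $\lambda$ the integral coroot subsystem $\Phi^\v_\lambda$ is controlled by the Kac-Wakimoto classification. One would want to reduce to a statement about vanishing of Lie-algebra cohomology of a nilpotent subalgebra with coefficients in a parabolic Verma module, where Arakawa-style arguments apply.

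For (2) and (3), once exactness is in hand one computes the character of $\Psi^-_f(L(\lambda))$ via the Euler-Poincar\'e principle. The result is a Weyl-Kac-type numerator summed over $W_f$ with signs. It vanishes identically precisely when some $\alpha^\v\in\bar\Phi^\v_f$ lies in $\Phi^\v_{\lambda,+}$, because then a simple reflection in $W_f$ produces pairwise sign cancellation; this gives (2). The $W_f$-symmetry of the character likewise yields $\Psi^-_f(L(\lambda))\simeq \Psi^-_f(L(w.\lambda))$ for $w\in W_f$ (using that both sides, when nonzero, are simple highest-weight $\calW_k(\bar\frakg,f)$-modules with matching leading weight), whereas different $W_f$-orbits can be distinguished by reading off the highest weight of the surviving character, finishing (3). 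Simplicity in (1) is obtained at the same time: the nonzero image is generated by a highest-weight vector, and the character matches a Verma-like quotient whose simple quotient is uniquely determined.

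The main obstacle is (1) in the \emph{minus} direction for non-principal $f$. For $\Psi^+_f$, Arakawa's exactness uses that the nilpotent subalgebra acts locally nilpotently on $L(\lambda)$, which fails for the opposite nilpotent in the minus reduction; one has to replace this by a subtler argument on Ext vanishing in a parabolic category $\calO$ for the affine algebra, and the estimates degrade as $\bar\frakl$ grows. A secondary difficulty is ruling out ``accidental'' vanishing of $\Psi^-_f(L(\lambda))$ beyond the combinatorial criterion in (2): the character argument only shows the Euler characteristic vanishes, so one still needs an independent nonvanishing input (e.g.\ an explicit singular vector or a geometric/associated variety computation) to upgrade it to vanishing of $H^0$ itself.
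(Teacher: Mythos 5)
This statement is labeled \emph{Conjecture} in the paper, not a theorem, and the paper does not prove it. It is attributed to Kac-Roan-Wakimoto \cite{KRW} and to Arakawa, and the paper explicitly records its status: proven by Arakawa \cite{A2} (and later by Dhillon-Raskin \cite{DR}) for principal nilpotent $f$, by Arakawa-van Ekeren \cite{AvE} for exceptional pairs $(f,k)$, and by Arakawa \cite{A3} for arbitrary $f$ in type $A$ restricted to ordinary modules; for general $f$ of standard Levi type it remains open. Everything downstream in the paper (Theorem~\ref{thm:Wbij}, Corollary~\ref{cor:count}, Theorem~\ref{thm:modW}) is phrased as conditional on this conjecture. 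So there is no ``paper's own proof'' to compare against, and you should not present your sketch as closing a gap the paper left.

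As a sketch of the known strategy your outline is reasonable in broad strokes, and to your credit you name the two genuine obstructions yourself: (i) exactness of $\Psi^-_f$ beyond the principal case, where the local-nilpotence argument that drives Arakawa's $\Psi^+_f$ vanishing does not transfer to the opposite nilradical, and (ii) the Euler-characteristic computation only shows that the alternating sum of characters vanishes when $\Phi^\v_{\lambda,+}\cap\bar\Phi^\v_f\neq\emptyset$, which does not by itself rule out ``accidental'' vanishing of $H^0$ in the complementary case, nor does it establish simplicity of a nonzero $H^0$. These are exactly the points on which the general conjecture is stuck, and your proposal does not supply new input to resolve them (a BGG-type resolution in affine parabolic $\calO$ at admissible level with the required exactness properties is itself nontrivial and not available off the shelf). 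The distinction between $W_f$-orbits in part (3) also requires more than reading off the highest weight of the Euler character: one needs that distinct orbits give genuinely nonisomorphic simple $\calW_k(\bar\frakg,f)$-modules, for which the character alone is insufficient without already knowing simplicity. In short, what you have is a plausible roadmap of the expected argument, not a proof; the paper itself treats the statement as conjectural and you should too.
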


For principal nilpotent elements, this was first conjectured by Frenkel-Kac-Wakimoto \cite[Conjecture $3.4_{-}$, Proposition 3.4]{FKW} and proved first by Arakawa \cite{A2}, and later Dhillon-Raskin \cite{DR} gave a different proof using localization techniques.
Kac-Roan-Wakimoto \cite[Conjectures $3.1A$, $3.1B$]{KRW} gave partial generalizations of Frenkel-Kac-Wakimoto's conjecture to $f$ of standard Levi type.
The current form of the conjecture is suggested to us by Arakawa.

Besides the principal nilpotent case, it has also been proven for exceptional pairs of $f$ and $k$ for any $\bar\frakg$ by Arakawa-van Ekeren \cite{AvE}. Also, for arbitrary nilpotent element $f$ in type $A$, it holds for ordinary modules \cite{A3}.

Let $\Irr(\calW_k(\bar\frakg,f))$ be the set of isomorphism classes of simple highest weight $\calW_k(\bar\frakg,f)$-modules in the essential image of the functor $\Psi^-_f$.
Based on Conjecture \ref{conj}, it can be parametrised as follows:
first, since $\Psi_f^-$ is exact, the module $H_{f}^0(L(\lambda))$ is nonzero if and only if its character is nontrivial. By \cite[Thm.~2.3(c)]{KW07}, this is the case if and only if 
$\Phi^\v_{\lambda,+}\subset \Phi^\v_+\backslash \bar{\Phi}^\v_f$.
Now recall that $\lambda\in\Adm_k$ has the form $\lambda=x.(k \varpi_0)$ for $x\in\eW$ such that $x(\Pi^\v_u)\subset \Phi^\v_+$, and $\Phi^\v_\lambda$ is generated by $x(\Pi^\v_u)$ as root system. Therefore $H_{f}^0(L(\lambda))\neq 0$ if and only if 
$$x(\Pi^\v_u)\subset \Phi^\v_+\backslash \bar{\Phi}^\v_f.$$
So Conjecture \ref{conj} implies that $\Irr(\calW_k(\bar\frakg,f))$ is parametrised by the set
\begin{equation}\label{eq:admf}
\Adm_k^f=\{ x.(k \varpi_0)\mid x(\Pi^\v_u)\subset \Phi^\v_+\backslash \bar{\Phi}^\v_f\}/\sim_f,
\end{equation}
where the equivalence relation $\sim_f$ is given by $x\sim_f x'$ if $x'\in W_f x$.

\subsection{Matching with fixed points in affine Spaltenstein varieties}
Let $P_f^\v\subset G^\v$ be the parahoric subgroup containing $I^\v$ and such that the root system of its Levi factor is given by $\bar{\Phi}^\v_f$.
We consider the partial affine flag variety$\Fl^f=G^\v/P_f^\v$, and the affine Spaltenstein variety is defined by
\[\Fl_\gamma^f=\{g P_f^\v\mid \Ad_{g^{-1}}(\gamma)\in \rad(\Lie (P_f^\v))\}.\]
Here $\rad(\Lie (P_f^\v))$ refers to the radical of the Lie algebra of $P_f^\v$.
Let $\Fl_\gamma^{f,\circ}$ be the neutral component.
The $\bbC^\times$ defined in \eqref{Cstar} acts on $\Fl_\gamma^f$ by left multiplication.
Notice that the natural projection $\pi: \Fl\to\Fl^f$ restricts to the quotient map $\eW\to \eW/W_f$ at the level of $T^\v$-fixed points.
Next, note that the roots in $\rad(\Lie (P_f^\v))$ are precisely $\Phi^\v_+\backslash \bar{\Phi}^\v_f$.
So for $y\in \eW$, we have
\begin{eqnarray*}
y\in \Fl^\gamma_f  &\Longleftrightarrow& \Ad_{y^{-1}}(\gamma)\in\rad(\Lie (P_f^\v))\\
&\Longleftrightarrow& y^{-1}(\Pi_u)\subset \Phi^\v_+\backslash \bar{\Phi}^\v_f.
\end{eqnarray*}

Now, Theorem \ref{thm:adm} and \eqref{eq:admf} implies a bijection between $(\Fl_\gamma^{f,\circ})^{\bbC^\times}$ and $\Adm_k^f$.

\begin{thm}\label{thm:Wbij}
Assuming Conjecture \ref{conj} holds, then the bijection \eqref{eq:bij}
induces a bijection 
\begin{equation}
(\Fl_\gamma^{f,\circ})^{\bbC^\times}\simeq \Irr(\calW_k(\bar\frakg,f)).
\end{equation}
\end{thm}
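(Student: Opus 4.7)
The strategy is to reduce this to Theorem A together with the parametrisation of $\Irr(\calW_k(\bar\frakg,f))$ provided by Conjecture \ref{conj}. The bridge between the two sides is the natural projection $\pi \colon \Fl \to \Fl^f$, restricted to affine Springer/Spaltenstein fibres and their fixed points.

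First, I would analyse the $\bbC^\times$-fixed points of the affine Spaltenstein variety. The one-parameter subgroup $\nu$ of \eqref{Cstar} acts on $\Fl^f$ through $T^\v$, and essentially the same argument as in the paragraph following \eqref{Cstar} shows that $(\Fl^f)^{\bbC^\times} = (\Fl^f)^{T^\v}$, which is indexed by $\eW/W_f$. Combining this with the computation already carried out immediately before the statement of Theorem C, the set $(\Fl^f_\gamma)^{\bbC^\times}$ consists exactly of those cosets $yW_f \in \eW/W_f$ for which $y^{-1}(\Pi^\v_u) \subset \Phi^\v_+ \setminus \bar\Phi^\v_f$ (this condition is well-defined on the coset because $W_f$ preserves $\bar\Phi^\v$ and its complement inside $\Phi^\v_+$).

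Second, I would handle the neutral component. The natural projection $\pi$ restricts to $\Fl^\circ_\gamma \to \Fl^{f,\circ}_\gamma$ and is $\Omega_u$-equivariant, so passing to the neutral components corresponds to quotienting the fixed point set by $\Omega_u$, exactly as in the proof of Theorem A. Thus
\[
(\Fl^{f,\circ}_\gamma)^{\bbC^\times} \;\simeq\; \{\, y \in \eW_u \,\}\big/ (W_f\text{-left}\;\times\;\Omega_u\text{-right}).
\]
Sending $y \mapsto y^{-1}.(k \varpi_0)$ and combining this description with Theorem A identifies the right-hand side with the set $\Adm_k^f$ of \eqref{eq:admf}: the condition $y^{-1}(\Pi^\v_u)\subset \Phi^\v_+\setminus \bar\Phi^\v_f$ is exactly the condition cutting out $\Adm_k^f$ from $\Adm_k$, and the left $W_f$-action on $y$ translates precisely to the equivalence relation $\sim_f$ (since $x = y^{-1}$ and left multiplication of $y$ by $W_f$ is right multiplication of $x$ by $W_f$, which modulo the $\rho$-shift corresponds to $W_f.$).

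Third, invoking Conjecture \ref{conj} as described in the paragraph surrounding \eqref{eq:admf} identifies $\Adm_k^f$ with $\Irr(\calW_k(\bar\frakg,f))$ via $x.(k\varpi_0) \mapsto \Psi_f^-(L(x.(k\varpi_0)))$. Composing the three bijections yields the desired map $(\Fl^{f,\circ}_\gamma)^{\bbC^\times} \simeq \Irr(\calW_k(\bar\frakg,f))$. The main point requiring care is matching the $W_f$-equivalence on the geometric side (built into the passage from $\eW$ to $\eW/W_f$) with the equivalence $\sim_f$ on the representation-theoretic side; once this compatibility is checked, together with the $\Omega_u$-quotient for the neutral components, everything else is a direct concatenation of Theorem A and Conjecture \ref{conj}.
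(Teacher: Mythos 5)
Your proposal follows essentially the same route as the paper: identify $(\Fl^f)^{T^\v}$ with $\eW/W_f$, observe that the condition defining the Spaltenstein fibre translates to $y^{-1}(\Pi^\v_u)\subset \Phi^\v_+\setminus\bar\Phi^\v_f$, quotient by $\Omega_u$ for the neutral component, and then concatenate with Theorem A and the identification of $\Adm_k^f$ with $\Irr(\calW_k(\bar\frakg,f))$ coming from Conjecture~\ref{conj}.

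There is, however, a left/right confusion that you should clean up. The fixed points of $\Fl^f$ are \emph{right} cosets $yW_f$, so the geometric equivalence on $y$ is $y\sim yw$, i.e., right multiplication by $W_f$ — not left, as written in your display $(\Fl^{f,\circ}_\gamma)^{\bbC^\times} \simeq \{y\in\eW_u\}/(W_f\text{-left}\times\Omega_u\text{-right})$. Passing through $y\mapsto x:=y^{-1}$ then turns this into \emph{left} multiplication of $x$ by $W_f$, which is exactly the relation $\sim_f$ of the paper (``$x'\in W_f x$''). Your parenthetical ``left multiplication of $y$ ... is right multiplication of $x$'' therefore has the roles swapped; as stated it would lead to $x'\in xW_f$, which is not what $\sim_f$ says. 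Relatedly, the well-definedness justification should refer to $W_f$ preserving $\bar\Phi^\v_f$ and $\Phi^\v_+\setminus\bar\Phi^\v_f$ (the latter because $W_f$ permutes the coroot spaces of the radical of $P^\v_f$), rather than ``$\bar\Phi^\v$ and its complement.'' Once these are corrected, the argument lines up exactly with the paper's.
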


\medskip

\subsection{Counting of simple highest weight $\calW_k(\bar\frakg,f)$-modules}

The bijection above reduces the counting of the number of simple highest weight $\calW_k(\bar\frakg,f)$-modules to the counting of fixed points in $(\Fl_\gamma^{f,\circ})^{\bbC^\times}$. This can be done as below, following ideas in the work of Sommers \cite{S}.

Let us first give another description of elements in $\Adm^f_k$.
For any integer $n$, let $\Pi^\v_n=\{(n-1)c+\alpha_0^\v, \alpha^\v_i, i\in \bar I\}$.
The $n$-th fundamental alcove
$$\calA_n=\{ v\in \bar\frakh_\bbR\mid (\alpha^\v, v)\leqslant 0,\ \forall \alpha^\v\in\Pi^\v_n\}$$
is a fundamental domain for the action of $\bar W\ltimes n Q^\v$ on $\bar\frakh_\bbR$.
For $x\in \eW$, we have $x(\Pi^\v_u)\subset \Phi^\v_+$ if and only if $x(\calA_1)\subset \calA_u$. Thus $\eW_u=\{x\in\eW\mid x(\calA_1)\subset \calA_u\}$.
Since $\calA_u$ is a fundamental domain for the action of $\bar W\ltimes u Q^\v$. Elements in $\eW_u$ are in bijection with the right cosets of $\bar W\ltimes u Q^\v$ on $\eW$. The quotient $\bar W\ltimes u Q^\v\backslash \eW$ identifies naturally with $\calS_u:=P^\v/uQ^\v$ with the map given by
\[(\bar W\ltimes u Q^\v)\backslash\eW\simto \calS_u,\quad x\mapsto -x^{-1}(0).\]
Further, this isomorphism is $\eW$-equivariant with the action induced by the right multiplication of $\eW$ on itself on the left hand side, and the natural action of $\eW$ on $\calS_u$ on the right hand side. Note that on the set of representatives, this isomorphism is given by
\begin{equation}\label{isoSu}
\eW_u\simto \calS_u,\quad \pi_b\mapsto -\pi_b(0)=b_-.
\end{equation}

\begin{lemma}\label{lem:stab}
For $\pi_b\in\eW_u$, we have $\pi_b(\Pi^\v_u)\subset\Phi^\v_+\backslash\bar{\Phi}^\v_f$ if and only if 
the stabilizer of $b$  for the $W_f$-action on $\calS_u$ is trivial.
\end{lemma}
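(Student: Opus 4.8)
The plan is to translate both sides of the claimed equivalence into statements about the root subsystem $\bar\Phi^\v_f$ and then compare them directly. First I would unwind the left-hand side. Since $\pi_b\in\eW_u$ already gives $\pi_b(\Pi^\v_u)\subset\Phi^\v_+$, the extra condition $\pi_b(\Pi^\v_u)\subset\Phi^\v_+\backslash\bar\Phi^\v_f$ fails precisely when some element of $\pi_b(\Pi^\v_u)$ lands in $\bar\Phi^\v_f$. Because $\Phi^\v_\lambda$ for $\lambda=\pi_b.(k\varpi_0)$ is the root subsystem generated by $\pi_b(\Pi^\v_u)$, and because $\bar\Phi^\v_f$ is a (finite, hence closed) subsystem of the finite part, the condition $\pi_b(\Pi^\v_u)\cap\bar\Phi^\v_f=\emptyset$ is equivalent to $\Phi^\v_{\pi_b.(k\varpi_0)}\cap\bar\Phi^\v_f=\emptyset$; I would record this reformulation as the first step, so that the left-hand side becomes: the integral root system attached to $b$ meets $\bar\Phi^\v_f$ trivially.

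Next I would unwind the right-hand side using the $\eW$-equivariant identification \eqref{isoSu}, $\pi_b\mapsto b_-=-\pi_b(0)$, between $\eW_u$ and $\calS_u=P^\v/uQ^\v$. Under this identification the $W_f$-action on $\calS_u$ is the one induced from the natural $W_f$-action on $P^\v$ (translating by $uQ^\v$), so $w\in W_f$ fixes the class of $b_-$ in $\calS_u$ if and only if $w(b_-)-b_-\in uQ^\v$. The key point is then to see that a nontrivial such $w$ exists if and only if $\bar\Phi^\v_f$ contains a coroot $\alpha^\v$ with $\langle k\varpi_0+\pi_b.(k\varpi_0)+\text{(shift)},\alpha^\v\rangle\in\bbZ$ in the appropriate normalisation — i.e. the stabiliser is nontrivial exactly when the affine hyperplane arrangement coming from $\bar\Phi^\v_f$ at level $u$ passes through the point $b_-$. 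Concretely, for the $W_f$-action on $\bar\frakh_\bbR/uQ^\v$ a point has nontrivial stabiliser if and only if it lies on one of the reflecting affine hyperplanes $\{\langle v,\alpha\rangle\in u\bbZ\}$ for $\alpha\in\bar\Phi_f$ (the roots dual to $\bar\Phi^\v_f$); so I would phrase this as: the $W_f$-stabiliser of $b_-$ is trivial iff $\langle b_-,\alpha\rangle\notin u\bbZ$ for every $\alpha\in\bar\Phi_f$.

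The final step is to match these two conditions. Using $b_-=u_b(b)$ and the explicit action $\pi_b=t_bu_b^{-1}$ together with the description of when $t_bw(\alpha^\v_i)$ or $t_bw(uc-\theta^\v)$ lands in a finite-type coroot — exactly as in the proof of Lemma \ref{lem:sigmau}, where the relevant integrality is controlled by the pairings $(\alpha^\v_i,w^{-1}b)$ — I would check that $\pi_b(\Pi^\v_u)$ contains an element of $\bar\Phi^\v_f$ precisely when $\langle b_-,\alpha\rangle\in u\bbZ$ for some $\alpha\in\bar\Phi_f$; equivalently, when the generating set $\pi_b(\Pi^\v_u)$ of $\Phi^\v_{\pi_b.(k\varpi_0)}$ hits a wall of the $W_f$-arrangement. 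Since $\pi_b(\Pi^\v_u)$ is a simple system for $\Phi^\v_{\pi_b.(k\varpi_0)}$, its meeting $\bar\Phi^\v_f$ is equivalent to the whole system meeting $\bar\Phi^\v_f$, closing the loop with Step 1. The main obstacle I anticipate is the bookkeeping in this last step: one must carefully track the difference between $P^\v$ and $Q^\v$, the normalisation factor $h^\v/u$ relating $\langle\lambda+\rho,-\rangle$ to pairings with $b_-$, and the twist by $u_b$ versus $u_b^{-1}$, to be sure that ``nontrivial $W_f$-stabiliser of $b_-$ mod $uQ^\v$'' lines up exactly with ``$\pi_b(\Pi^\v_u)$ meets $\bar\Phi^\v_f$'' rather than with some coarser or finer condition. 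Once the dictionary between the $W_f$-hyperplane arrangement at level $u$ and the integral coroots of $\pi_b.(k\varpi_0)$ is set up cleanly, both implications follow formally.
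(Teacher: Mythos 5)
Your overall strategy---translate both sides into conditions about pairings of $b$ with $\bar\Phi^\v_f$ modulo $u$, and compare---is the right ballpark and is close to the paper's, but there are three genuine gaps, two of which match obstacles the paper explicitly handles and one of which is a wrong turn.

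\emph{First, the route through $\Phi^\v_\lambda$ is both unjustified and unnecessary.} You claim $\pi_b(\Pi^\v_u)\cap\bar\Phi^\v_f=\emptyset$ iff $\Phi^\v_{\pi_b.(k\varpi_0)}\cap\bar\Phi^\v_f=\emptyset$ ``because $\bar\Phi^\v_f$ is closed'' and ``since $\pi_b(\Pi^\v_u)$ is a simple system.'' Neither reason suffices: a closed subsystem of a root system can certainly meet the system without meeting a chosen simple system (e.g.\ $\{\pm(\alpha_1+\alpha_2)\}$ in $A_2$). The equivalence is in fact true here, but proving it requires the specific structure: every simple coroot in $\pi_b(\Pi^\v_u)$ has nonnegative $c$-coefficient, so a finite element of $\Phi^\v_\lambda$ can only involve the finite simple ones, and then one uses that $\bar\Phi^\v_f$ is cut out by a dominant element of $\bar\frakh^f$ to force the summands themselves into $\bar\Phi^\v_f$. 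The paper sidesteps all of this: its proof never introduces $\Phi^\v_\lambda$ and works directly with $\pi_b(\Pi^\v_u)$ via \eqref{eq:desc}.

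\emph{Second, the reverse implication silently uses Sommers' Proposition 4.1.} Your Step 2 asserts that a point of $\bar\frakh_\bbR/uQ^\v$ has nontrivial $W_f$-stabilizer iff it lies on a reflecting hyperplane. A priori, $\mathrm{Stab}_{W_f}$ is the intersection of $W_f$ with the affine stabilizer, and an element of this intersection need not be a reflection. The fact that the stabilizer is a parabolic subgroup of $W_f$ (hence generated by the reflections it contains) is exactly \cite[Prop.~4.1]{S}, which the paper cites at this point; your sketch treats it as self-evident. This is the crucial ingredient in the reverse direction, and without it the implication ``nontrivial stabilizer $\Rightarrow$ some $\beta^\v\in\bar\Phi^\v_f$ with $(\beta^\v,b)\equiv 0\pmod u$'' has no justification.

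\emph{Third, you characterize the stabilizer of $b_-$, but the lemma is about the stabilizer of $b$.} The identification \eqref{isoSu} sends $\pi_b\mapsto b_-$, not $b\mapsto b_-$, and the lemma's condition is that $\mathrm{Stab}_{W_f}([b])$ is trivial. Since $b_-=u_b(b)$ and $u_b\notin W_f$ in general, $\mathrm{Stab}_{W_f}([b])$ and $\mathrm{Stab}_{W_f}([b_-])$ are not the same subgroup. The paper handles this by working with $(\beta^\v,b)\equiv 0 \pmod u$ for $\beta^\v\in\bar\Phi^\v_f$ and only then translating $(\beta^\v,b)=(u_b(\beta^\v),b_-)$ to invoke the alcove $\calA_u$, so the $u_b$-twist is carried through explicitly. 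You flag ``the twist by $u_b$ versus $u_b^{-1}$'' as an obstacle to be checked, but as written your hyperplane condition $\langle b_-,\alpha\rangle\in u\bbZ$ for $\alpha\in\bar\Phi_f$ is for the wrong orbit representative; it should be $\langle b,\alpha\rangle\in u\bbZ$ for $\alpha\in\bar\Phi_f$, equivalently $\langle b_-,\alpha\rangle\in u\bbZ$ for $\alpha\in u_b(\bar\Phi_f)$.

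In short: drop Step 1 (it is not needed and your justification is wrong), cite Sommers for the stabilizer-is-parabolic fact, and keep careful track of $b$ versus $b_-$. With those repairs your argument converges to the paper's, which passes from the nontrivial-stabilizer hypothesis directly via Sommers and the geometry of $\calA_u$ to exhibit an element of $\pi_b(\Pi^\v_u)\cap\bar\Phi^\v_f$.
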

\begin{proof}
If $\pi_b(\Pi^\v_u)$ is not contained in $\Phi^\v_+\backslash \bar{\Phi}^\v_f$, then there exists $\alpha^\v\in\Pi^\v_u$ such that $\pi_b(\alpha^\v)=\beta^\v$ belongs to $\bar{\Phi}^\v_f$. By \eqref{eq:desc}, if $\alpha^\v=\alpha^\v_i$ for some $i$ in $\bar I$, then $(u_b^{-1}(\alpha^\v_i),b)=0$ and $u_b^{-1}(\alpha^\v_i)$ belongs to $\bar{\Phi}^\v_f$; if $\alpha^\v=u\delta-\theta^\v$, then $(-u_b^{-1}(\theta^\v),b)=u$ and $-u_b^{-1}(\theta^\v)$ belongs to $\bar{\Phi}^\v_f$.
In both cases, the reflection $s_{\beta^\v}$ stabilizes $b\in \calS_u$. Thus the stabilizer of $b$ in $W_f$ is nontrivial.

For the other direction, assume $b$ has a nontrivial stabilizer in $W_f$. 
Then by \cite[Prop.~4.1]{S} this stabilizer is a parabolic subgroup of $W_f$.
Thus there exists ${\beta^\v}\in\bar{\Phi}_f^\v$ such that $(\beta^\v, b)\cong 0$ mod $u$. Equivalently $(u_b(\beta^\v),b_-)\cong 0$ mod $u$. Note that $b_-\in\calA_u$. This implies either $u_b(\beta^\v)=\pm \alpha_i^\v$ for some $i\in\bar I$ and $(u_b(\beta^\v),b_-)=0$, or $u_b(\beta^\v)=\mp \theta^\v$ and $(u_b(\beta^\v),b_-)=\pm u$.
Again, by \eqref{eq:desc}, this means one of $\pm\beta^\v$ belong to $\pi_b(\Pi_u^\v)$. We conclude that $\pi_b(\Pi_u^\v)\cap \bar\Phi^\v_f$ is nonempty.
\end{proof}

We deduce the following formula for the cardinality of $(\Fl_\gamma^{f,\circ})^{\bbC^\times}$. Recall that $\ell$ is the rank of $\bar\frakg$. Let $m_1,...,m_j$ be the exponents of the Weyl group $W_f$.

\begin{prop}\label{prop:counting}
We have
$$|(\Fl_\gamma^{f,\circ})^{\bbC^\times}|=\frac{1}{|W_f|}u^{\ell-j}(u-m_1)...(u-m_j).$$
\end{prop}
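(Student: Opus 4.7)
The plan is to transport the geometric count to a lattice count via the identifications already in place, and then apply a Sommers-style enumeration of regular elements.

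By Theorem \ref{thm:Wbij}, $|(\Fl_\gamma^{f,\circ})^{\bbC^\times}|=|\Adm_k^f|$. Combining \eqref{eq:admf} with \eqref{eq:b0} identifies $\Adm_k^f$ with $W_f\backslash \eW_u^f/\Omega_u$, where
\[
\eW_u^f=\{x\in \eW_u\mid x(\Pi_u^\v)\subset \Phi_+^\v\backslash\bar\Phi_f^\v\},
\]
with $W_f$ acting on the left and $\Omega_u$ from the right. The $\eW$-equivariant bijection \eqref{isoSu} together with Lemma \ref{lem:stab} then identifies $\eW_u^f$ with $\calS_u^{\mathrm{reg}}:=\{b\in\calS_u\mid \mathrm{Stab}_{W_f}(b)=\{e\}\}$. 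Under this identification, $\Omega_u$ acts on $\calS_u=P^\v/uQ^\v$ as translation by the subgroup $uP^\v/uQ^\v\simeq \Omega_u$, and $W_f$ fixes $uP^\v/uQ^\v$ pointwise because $wl-l\in Q^\v$ for all $l\in P^\v$, $w\in W_f$. The coprimality $(u,r^\v)=(u,h^\v)=1$ forces $(u,|P^\v/Q^\v|)=1$, hence $Q^\v\cap uP^\v=uQ^\v$; combined with the $W_f$-freeness of $\calS_u^{\mathrm{reg}}$ this gives freeness of the combined $W_f\times\Omega_u$-action. Therefore
\[
|(\Fl_\gamma^{f,\circ})^{\bbC^\times}|=\frac{|\calS_u^{\mathrm{reg}}|}{|W_f|\cdot|\Omega_u|},\qquad |\Omega_u|=|P^\v/Q^\v|.
\]

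It remains to establish the counting identity
\[
|\calS_u^{\mathrm{reg}}|=|P^\v/Q^\v|\cdot u^{\ell-j}(u-m_1)\cdots(u-m_j),
\]
whose substitution into the display above yields the claimed formula. Following Sommers \cite{S}, I would decompose $\bar\frakh=V_f\oplus V_f^\perp$, where $V_f=\mathrm{span}_\bbR(\bar\Phi_f^\v)$ has dimension $j$ and $W_f$ acts trivially on $V_f^\perp$ and faithfully as a rank-$j$ reflection group on $V_f$. The $V_f^\perp$-direction contributes the factor $u^{\ell-j}$. In the $V_f$-direction, inclusion--exclusion over the reflection arrangement of $W_f$ (using Lemma \ref{lem:stab} to control intersections of hyperplanes) combined with the Möbius function on the intersection lattice produces the Shephard--Todd product $\prod_{i=1}^j(u-m_i)$. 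The overall factor $|P^\v/Q^\v|$ accounts for the finite-index embedding of $(P^\v\cap V_f^\perp)\oplus Q_f^\v$ inside $P^\v$.

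The principal obstacle is controlling this finite-index discrepancy. Since $P^\v$ does not split integrally as the orthogonal sum of its $V_f^\perp$- and $V_f$-components, a naive product count fails, and one must carefully track corrections at each step of the inclusion--exclusion. The coprimality hypothesis $(u,r^\v)=(u,h^\v)=1$ is precisely what makes these corrections collapse to the clean multiplicative factor $|P^\v/Q^\v|$, completing the count.
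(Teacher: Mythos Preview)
Your reduction to counting $W_f$-free orbits in $\calS_u$ is correct and matches the paper, but two points deserve comment.

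First, a minor issue: invoking Theorem~\ref{thm:Wbij} for the identity $|(\Fl_\gamma^{f,\circ})^{\bbC^\times}|=|\Adm_k^f|$ is circular, since that theorem is conditional on Conjecture~\ref{conj} and Proposition~\ref{prop:counting} is meant to be unconditional. The unconditional bijection $(\Fl_\gamma^{f,\circ})^{\bbC^\times}\simeq\Adm_k^f$ follows directly from Theorem~\ref{thm:adm} together with the description of $\Fl_\gamma^{f,\bbC^\times}$ in the paragraph preceding Theorem~\ref{thm:Wbij}.

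Second, and more substantively, your counting of $|\calS_u^{\mathrm{reg}}|$ is left incomplete: you correctly identify the obstacle that $P^\v$ does not split integrally along $V_f\oplus V_f^\perp$, and then stop. The paper circumvents this obstacle entirely by a different device. Rather than decomposing the lattice, it uses Sommers' formula \cite[Prop.~3.9]{S} that $|(\calS_u)^w|=|\Omega|\cdot u^{d(w)}$ for every $w\in\bar W$ (hence for every $w\in W_f$), together with the sign-weighted identity
\[
|\calS_u^{\mathrm{reg}}|=\sum_{w\in W_f}\epsilon(w)\,|(\calS_u)^w|.
\]
This identity holds because every stabilizer $Z_{W_f}(v)$ is a parabolic subgroup of $W_f$ by \cite[Prop.~4.1]{S}, so the sign sum over any nontrivial stabilizer vanishes. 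Substituting Sommers' count and using $d(w)=\ell-s(w)$ (reflection length) gives $|\Omega|\sum_{w\in W_f}(-1)^{s(w)}u^{\ell-s(w)}$, and the Shephard--Todd identity for the rank-$j$ group $W_f$ evaluates this as $|\Omega|\cdot u^{\ell-j}\prod_i(u-m_i)$. This approach never needs to split the lattice, so the finite-index discrepancy you worry about simply does not arise.
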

\begin{proof}
We follow a similar argument as in the proof of \cite[Prop.~6.1]{S}.
The question is equivalent to calculate $|\Adm_k^f|$.
Let 
\[\eW_{u,f}=\{x\in\eW_u\mid x(\Pi^\v_u)\subset\Phi^\v_+\backslash\bar{\Phi}^\v_f\}.\]
It is a subset of $\eW_u$, which is stable under the action of $\Omega_u$.
By \eqref{eq:admf}, there is a bijection between $\Adm_k^f$ and $W_f\backslash \eW_{u,f}/\Omega_u$.
The lemma above implies that the set $W_f\backslash \eW_{u,f}$ is in bijection with free $W_f$-orbits in $\calS_u$. Write $e$ for the cardinality of $\Omega$.
By \cite[Prop.~3.9]{S}, for any $w\in W_f$, the number of $w$-fixed points in $\calS_u$ is $eu^{d(w)}$, where $d(w)$ is the dimension of $\frakh^w$. We claim that 
\[|\{v\in \calS_u\mid Z_{W_f}(v)=1\}|=\sum_{w\in W_f}\epsilon(w)|(\calS_u)^w|.\]
Indeed, consider the action of $W_f$ on $\calS_u$,  let $X=\{(w,h)\in W_f\times\calS_u\mid wh=h\}$. Counting the function $\sum_{(w,h)\in X}\epsilon(w)$ in two ways give the equality
\begin{equation}\label{eq: counting}
\sum_{w\in W_f}\epsilon(w)|(\calS_u)^w|=\sum_{v\in \calS_u}\sum_{w\in Z_{W_f}(v)}\epsilon(w).
\end{equation}
Now, by \cite[Prop.~4.1]{S}, for any $v\in\calS_u$ the stabilizer $Z_{W_f}(v)$ is always a parabolic subgroup of $W_f$. In particular, it contains an element of length one. Therefore $\sum_{w\in Z_{W_f}(v)}\epsilon(w)=0$ whenever $Z_{W_f}(v)$ is nontrivial. So the right hand side of \eqref{eq: counting} equals $|\{v\in \calS_u\mid Z_{W_f}(v)=1\}|$. The claim follows. 

We deduce that
\begin{align*}
|W_f\backslash \eW_{u,f}|&=\frac{1}{|W_f|}|\{v\in \calS_u\mid Z_{W_f}(v)=1\}|\\
&=\frac{1}{|W_f|}|\sum_{w\in W_f}\epsilon(w)eu^{d(w)}\\
&=\frac{1}{|W_f|}|\sum_{w\in W_J}(-1)^{s(w)}eu^{\ell-s(w)}\\
&=\frac{1}{|W_f|}eu^{\ell-j}(u-m_1)...(u-m_j).
\end{align*}
Here the third equality is given by the fact $d(w)=\ell-s(w)$, where $s(w)$ is the least number of reflections needed to write $w$ as a product of reflections. So $l(w)$ and $s(w)$ have the same parity. The last equality follows from the Shepard-Todd theorem.

Finally, since $\Omega_u$ acts on $W_f\backslash \eW_{u,f}$ freely, we deduce 
\[|\Adm^f_k|=|W_f\backslash \eW_{u,f}/\Omega_u|=\frac{1}{|W_f|}u^{l-j}(u-m_1)...(u-m_j).\]
\end{proof}

\smallskip

\begin{cor}\label{cor:count}
Assuming Conjecture \ref{conj} holds, then
\begin{equation}\label{count}
|\Irr(\calW_k(\bar\frakg,f))|=\frac{1}{|W_f|}u^{\ell-j}(u-m_1)...(u-m_j).
\end{equation}
In particular, $\calW_k(\bar\frakg,f)=0$ if and only if $u$ is equal to one of the exponents of $W_f$.
\end{cor}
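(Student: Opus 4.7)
The plan is to combine the two results already established in this section. By Theorem \ref{thm:Wbij}, under Conjecture \ref{conj}, there is a bijection $\Irr(\calW_k(\bar\frakg,f)) \simeq (\Fl_\gamma^{f,\circ})^{\bbC^\times}$; Proposition \ref{prop:counting} then evaluates the cardinality of the right-hand side via a Shephard--Todd-style product. Composing these two statements immediately yields the formula \eqref{count} for $|\Irr(\calW_k(\bar\frakg,f))|$, which is the first assertion of the corollary.

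For the ``in particular'' clause, I analyze the factorization on the right-hand side of \eqref{count}. Since $u$ is a positive integer, the factor $u^{\ell-j}$ never vanishes, while each linear factor $u - m_i$ vanishes precisely when $u = m_i$. Hence the right-hand side of \eqref{count} is zero if and only if $u$ coincides with one of the exponents $m_1, \ldots, m_j$ of $W_f$.

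It remains to argue the equivalence $\calW_k(\bar\frakg,f) = 0 \Leftrightarrow |\Irr(\calW_k(\bar\frakg,f))| = 0$. The forward direction is immediate, since a zero vertex algebra has no nonzero modules. For the converse, my strategy is to exhibit an element $\lambda_0 \in \Adm_k$ such that $\Psi^-_f(L(\lambda_0))$ realizes the simple quotient of $\calW_k(\bar\frakg,f)$ as a vertex algebra; then, whenever $\calW_k(\bar\frakg,f) \neq 0$, this simple quotient is a nonzero simple highest weight module lying in the essential image of $\Psi^-_f$, contradicting $|\Irr| = 0$.

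I expect this last step to be the main obstacle, because Conjecture \ref{conj} only parametrises modules in the image of $\Psi^-_f$, whereas the vertex algebra $\calW_k(\bar\frakg,f)$ itself is defined via $\Psi^+_f$. Bridging the two should rely on the known compatibility between the $(+)$ and $(-)$ Drinfeld--Sokolov reductions (for instance via a Chevalley-type involution or a twist by the longest element of $W_f$), which would identify the simple W-algebra with $\Psi^-_f(L(\lambda_0))$ for an explicit admissible $\lambda_0$; this input is separate from Conjecture \ref{conj} and is where the argument is most delicate.
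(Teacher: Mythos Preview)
Your argument for the formula \eqref{count} is exactly the paper's: combine Theorem \ref{thm:Wbij} with Proposition \ref{prop:counting}.

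For the ``in particular'' clause you are doing more work than the paper, and your perceived obstacle is not one the paper confronts. The paper's entire argument is the single sentence: ``$\calW_k(\bar\frakg,f)$ has no simple highest weight module if and only if it is zero.'' The point is that any nonzero vertex algebra has a simple quotient which is automatically a nonzero simple highest weight module over itself; conversely a zero vertex algebra has none. There is no need to exhibit a specific $\lambda_0$ or to invoke any compatibility between $\Psi^+_f$ and $\Psi^-_f$.

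Your worry --- that $\Irr(\calW_k(\bar\frakg,f))$ was defined in Section~3.1 as the simples \emph{in the essential image of $\Psi^-_f$}, so one must check that the simple quotient of $\calW_k(\bar\frakg,f)$ actually lies there --- is textually legitimate, but the paper is reading Conjecture~\ref{conj} as a complete classification of simple highest weight $\calW_k(\bar\frakg,f)$-modules (this is how it is phrased in the introduction). Under that reading, any simple highest weight module is automatically in $\Irr(\calW_k(\bar\frakg,f))$, and the equivalence $\calW_k(\bar\frakg,f)=0 \Leftrightarrow |\Irr(\calW_k(\bar\frakg,f))|=0$ is immediate. So your elaborate detour through a $\Psi^+/\Psi^-$ comparison is unnecessary for the paper's purposes, though it does flag a genuine ambiguity in how $\Irr$ is used.
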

\begin{proof}
The first statement follows from Theorem \ref{thm:Wbij} and the previous proposition. 
The second is because the $\calW_k(\bar\frakg,f)$ has no simple highest weight module if and only if it is zero.
\end{proof}

\begin{rk}\label{rk1}
This gives a new way to detect when the $\calW$-algebra $\calW_k(\bar\frakg,f)$ is zero.
It is known that $\calW_k(\bar\frakg,f)=0$ if and only if $f$ does not belong to the associated variety of $L_k(\bar\frakg)$.
The associated variety of $L_k(\bar\frakg)$ is the closure of some nilpotent orbit $O_k$, which was explicitly determined by Arakawa \cite{A1}.
We checked  by a case by case computation that when $f$ is of standard Levi type, $f\not\in\bar{O}_k$ is indeed equivalent to $u$ being equal to one of the exponents of $W_f$.
%One can check from classification if $u$ is no greater than the maximal exponent of $W_f$ and coprime to both $h^\v$ and $r^\v$, it always equals to one of the exponents of $W_f$. This condition is equivalent to $f$ not in the associated variety of $L_k(\bar\frakg)$ for $f$ regular in a Levi.
\end{rk}

\begin{rk}\label{rk2}
The formula \eqref{count} also gives another way to determine when $\calW_k(\bar\frakg,f)=\bbC$.
We checked that the table \ref{table:trivialVOA} below gives a complete list for which $|\Irr(\calW_k(\bar\frakg,f))|=1$ according to \eqref{count}. These are precisely the cases when $\calW_k(\bar\frakg,f)=\bbC$ for boundary principal admissible level and $f$ regular in a Levi. This matches with the results in \cite{AvEM}, which are obtained by different methods.
%Moreover, in these cases $\calW_k(\bar\frakg,f)$ is lisse with both central charge and asymptotic growth being $0$.
%This implies $\calW_k(\bar\frakg,f)\simeq \bbC$. 
%On the other hand side, if $\calW_k(\bar\frakg,f)\simeq \bbC$, it can has only one simple module. So, assuming Conjecture \ref{conj} holds, Table \ref{table:trivialVOA} gives a complete list of $u$ and $f$ of standard Levi type for which $\calW_k(\bar\frakg,f)=\bbC$.
\begin{table}[h]
\begin{tabular}{c|c|c}
$\bar{\frakg}$ & $u$ & $f$ \\ \hline
$\bar{\frakg}$ & $h+1$ & principal \\ \hline
$\sl_{ul+1}$ & any & $[\underbrace{u,\cdots,u}_{l},1]$ \\ \hline
$\so_{ul+1}$ & odd & $[\underbrace{u,\cdots,u}_{l},1]$, $l$ even \\ \hline
$\sp_{ul}$  & odd & $[\underbrace{u,\cdots, u}_l]$, $l$ even \\ \hline
$\so_{ul}$ & odd & $[\underbrace{u,\cdots, u}_l]$, $l$ even \\ \hline
$\frake_7$ & $7$ & $A_6$ 
\end{tabular}
\caption{\label{table:trivialVOA} the case $|\Irr(\calW_k(\bar\frakg,f))|=1$, with $h$ being the Coxeter number of $\bar W$.}
\end{table}
\end{rk}

\medskip

\subsection{Modularity}

Finally, let us discuss how the $SL_2(\bbZ)$-actions are compatible with reductions.

Recall that for DAHA, we specialised the parameter $t=1$. The algebra $\bfH_\kappa$ contains a copy of the group algebra of $\bar{W}$. Let $\bfe_f=\sum_{w\in W_f}\epsilon(w)w$ be the anti-symmetrizer.
The subspace $\bfV_f:=\bfe_f\bfV$ in $\bfV$ is a module over the subalgebra $\bfe_f\bfH_\kappa\bfe_f$, and is stable under the $SL_2(\bbZ)$-action.
As a $\bar W$-representation $\bfV$ is identified with functions on $\calS_u/\Omega_u$ by \eqref{isoSu}. So it is a direct sum of induced representations $\Ind_{P_b}^{\bar W}(1)$ with $b$ varies in $\calS_u$. Here $P_b$ is the parabolic subgroup in $\bar W$ given by the stabiliser of $b$.
By Mackey formula, the factor $\bfe_f(\Ind_{P_b}^{\bar W}(1))=0$ if and only if $P_b\cap W_f$ is non-empty. So $\bfV_f$ has a basis $\bfe_f\chi_b$ where $b$ runs over a set of representatives of free $W_f$-orbit in $\calS_u/\Omega_u$.

Since the category of admissible representations is semi-simple, we can identify $\bbV$ with its Grothendieck group. Let $\bbV_f$ be the space spanned by the characters of simple highest weight $\calW_k(\bar\frakg,f)$-modules. Then it is identified with the Grothendieck group of the Serre subcategory in $\calW_k(\bar\frakg,f)\mod$ generated by $\Irr(\calW_k(\bar\frakg,f))$. The reduction functor $\Psi^-_f$ is exact, hence it induces a map on the Grothendieck group $\bbV\to \bbV_f$.

It follows from Lemma \ref{lem:stab} and the discussion above that for $\lambda=\pi_b.(k \varpi_0)$, the reduction $\Psi^-_f(L(\lambda))$ is zero if and only if $\bfe_f\chi_b=0$; and $\Psi^-_f(L(\lambda))=\Psi^-_f(L(\lambda'))$ if and only if $\pi_{b'}=w\pi_b$ for some $w\in W_f$, and in this case $\bfe_f\chi_{b'}=\epsilon(w) \bfe_f\chi_b$.
By consequence, the isomorphism $a:\bbV\simto\bfV$ descends to a unique isomorphism $a^f: \bbV_f\to \bfV_f$ such that the following diagram commute
\[\xymatrix{ \bbV\ar[r]^{a}\ar[d]_{\Psi^-_f} &\bfV \ar[d]^{\bfe_f} \\
\bbV_f\ar[r]^{a^f} &\bfV_f.
}\]
The action of $SL_2(\bbZ)$ commute with the Drinfeld-Sokolov reduction on the representation side, and commute with $\bfe_f$ on the DAHA side. Therefore, we deduce the following result.

\begin{thm}\label{thm:modW}
The following isomorphism
\[a_f: \bbV_f\to \bfV_f,\quad \Psi^-_f(\ch_b)\mapsto\epsilon(u_b)\bfe_f\chi_b\]
intertwines the $SL_2(\bbZ)$-action on both sides.
\end{thm}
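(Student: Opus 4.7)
The plan is to reduce Theorem~\ref{thm:modW} to Theorem~\ref{thm:modularity} via the commutative square displayed immediately above the statement, and then verify $SL(2,\bbZ)$-equivariance of the descended map by checking one compatibility on each side of the square. First, I would justify that $a_f$ is a well-defined isomorphism. The key input is Lemma~\ref{lem:stab} combined with a Mackey decomposition of $\bfV$ as a $\bar W$-representation: for $\lambda=\pi_b.(k\varpi_0)$ with $b\in\Sigma_u/\!\!\sim$, the class $\Psi^-_f(\ch_\lambda)$ vanishes in $\bbV_f$ iff $b$ has a nontrivial $W_f$-stabilizer in $\calS_u$, iff $\bfe_f\chi_b=0$ in $\bfV_f$; and two admissibles become identified in $\bbV_f$ precisely when they lie in the same $W_f$-orbit, matching $\bfe_f\chi_b$ up to the sign $\epsilon(w)$ coming from the anti-symmetrizer. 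Under the identification $a(\ch_b)=\epsilon(u_b)\chi_b$ of Theorem~\ref{thm:modularity}, these matching conditions force $a$ to descend uniquely to an isomorphism of quotients whose basis is indexed by the set $\Adm^f_k$ of \eqref{eq:admf}.

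Next, I would establish the $SL(2,\bbZ)$-equivariance of $a_f$ via two commutations. On the DAHA side, I would argue that left multiplication by $\bfe_f$ on $\bfV$ commutes with the $SL(2,\bbZ)$-action. Since $\bfe_f\in\bbC[W_f]\subset\bbC[\bar W]\subset \bfH_\kappa|_{q=\zeta}$, and the specialisation $t=1$ turns each $T_i$ into the reflection $s_i$ (so $T_i=T_i^{-1}$), the explicit formulae for $\tau_+$ and $\varepsilon$ acting on generators of $\bfH_{q,t}$ show that both fix $\bbC[\bar W]$ pointwise; hence so does $\sigma=\varepsilon\tau_+\varepsilon$. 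Combined with the intertwining rule $g\cdot H=g(H)\cdot g$ of the projective $PSL^c(2,\bbZ)$-action, this shows that $\bfe_f$ commutes with the action of $SL(2,\bbZ)$ on $\bfV$, so this action descends to $\bfV_f=\bfe_f\bfV$.

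On the VOA side, I would show that $\Psi^-_f\colon\bbV\to\bbV_f$ is $SL(2,\bbZ)$-equivariant with respect to the action on admissible characters defined by Kac--Wakimoto. For $\lambda\in\Adm_k$ the character $\ch_{\Psi^-_f(L(\lambda))}$ is obtained from $\ch_\lambda$ by a known specialization of variables coming from the Euler--Poincar\'e calculation of Drinfeld--Sokolov cohomology (for the principal case this is \cite{FKW}, and for $f$ of standard Levi type one can argue through the Kac--Wakimoto numerator formula). I would then check that this specialization of variables intertwines the $SL(2,\bbZ)$-action on $Y$ with the corresponding modular action on the appropriate subdomain, so that $\Psi^-_f$ is $SL(2,\bbZ)$-equivariant. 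Combining the two compatibilities with Theorem~\ref{thm:modularity} then yields the theorem.

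The step I expect to be the main obstacle is the VOA-side compatibility. For the principal reduction this is classical Frenkel--Kac--Wakimoto; however, for a general $f$ of standard Levi type with the minus reduction $\Psi^-_f$ (rather than the plus reduction used to define $\calW_k(\bar\frakg,f)$), tracking the $SL(2,\bbZ)$-equivariance at the level of normalized characters requires a careful analysis of how the specialization of variables interacts with the modular transformations on $Y$, possibly via an intermediate comparison between $\Psi^+_f$ and $\Psi^-_f$ through twisted/Ramond modules, or by a direct computation extending the arguments of \cite{KW89}.
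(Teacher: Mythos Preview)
Your approach is essentially the paper's own: descend Theorem~\ref{thm:modularity} through the commutative square by checking that $\bfe_f$ commutes with the $SL(2,\bbZ)$-action on the DAHA side and that $\Psi^-_f$ is $SL(2,\bbZ)$-equivariant on the VOA side, with the well-definedness of $a_f$ coming from Lemma~\ref{lem:stab} together with the Mackey decomposition of $\bfV$. The paper is in fact terser than you are---it simply asserts both compatibilities in a single sentence without argument---so the VOA-side obstacle you flag is not addressed there either; one small slip in your write-up: $\sigma=\tau_+\tau_-^{-1}\tau_+$ with $\tau_-=\varepsilon\tau_+\varepsilon$, not $\sigma=\varepsilon\tau_+\varepsilon$, though your conclusion that $\sigma$ fixes $\bbC[\bar W]$ at $t=1$ is unaffected since $\sigma$ lies in the group generated by $\tau_+$ and $\varepsilon$.
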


\begin{rk}
When $\bar{\frakg}=A_1$ and $f$ is an element in the principal orbit, the above result was first observed in \cite{KSY}. The generalisation to $\bar{\frakg}=A_{N-1}$ and $f$ principal was first presented in \cite{GKNPS}.
\end{rk}

%check citation
%\nocite{*}


\begin{thebibliography}{20}

\bibitem{A1} T.~Arakawa. \emph{Rationality of Admissible Affine Vertex Algebras in the Category O}. Duke Mathematical Journal 165, no. 1 (2016): 67--93. %https://doi.org/10.1215/00127094-3165113.

\bibitem{A2}
T.~Arakawa. \emph{Representation Theory of $\mathcal{W}$ -Algebras}. Inventiones Mathematicae 169, no. 2 (2007): 219--320. %https://doi.org/10.1007/s00222-007-0046-1.

\bibitem{A3}
T.~Arakawa. \emph{Representation Theory of $\mathcal{W}$ -Algebras, II}. In \emph{Exploring new structures and natural constructions in mathematical physics}, Advanced Studies in Pure Mathematics 61 (2011): 51-90.


\bibitem{AvE}
T.~Arakawa and J.~van~Ekeren. \emph{Rationality and Fusion Rules of Exceptional $\mathcal {W}$-Algebras.} Journal of the European Mathematical Society 25, no. 7 (2022): 2763--2813. %https://doi.org/10.4171/JEMS/1250.

\bibitem{AvEM}
T.~Arakawa, J.~van~Ekeren and A. Moreau. \emph{Singularities of Nilpotent Slodowy Slices and Collapsing Levels of W-Algebras.} \arxiv{2102.13462}.


%\cite{BLLPRvR}
\bibitem{BLLPRvR}
C.~Beem, M.~Lemos, P.~Liendo, W.~Peelaers, L.~Rastelli and B.~C.~van Rees.
\emph{Infinite Chiral Symmetry in Four Dimensions}.
Communications in Mathemathical Physics 336, no. 3 (2015): 1359-1433.
%doi:10.1007/s00220-014-2272-x
%[arXiv:1312.5344 [hep-th]].
%428 citations counted in INSPIRE as of 17 Feb 2023

\bibitem{BBAMY}
R.~Bezrukavnikov, P.~Boixeda Alvarez, M.~McBreen, and Z.~Yun. 
\emph{Non-Abelian Hodge Moduli Spaces and Homogeneous Affine Springer Fibers.}
\arxiv{2209.14695}.


\bibitem{Ch}
I.~Cherednik. \emph{Double Affine Hecke Algebras}. Vol. 319. London Mathematical Society Lecture Note Series. Cambridge University Press, Cambridge, 2005. %http://www.ams.org/mathscinet-getitem?mr=2133033.



\bibitem{DR}
G.~Dhillon, S.~Raskin. \emph{Localization for Affine $\calW$-Algebras}. Advances in Mathematics 413 (2023): Paper No. 108837, 51pp

%\bibitem{F}C.~Kenneth Fan. \emph{Euler Characteristic of Certain Affine Flag Varieties.} Transformation Groups 1, no. 1--2 (1996): 35--39. %https://doi.org/10.1007/BF02587734.


\bibitem{FKW}
E.~Frenkel, V.~Kac and M.~Wakimoto. \emph{Characters and Fusion Rules for W-algebras via Quantized Drinfel'd-Sokolov Reduction}. Comm. Math. Phys., 147(2):295--328, 1992.

%\cite{FN}
\bibitem{FN}
L.~Fredrickson and A.~Neitzke,
\emph{Moduli of Wild Higgs Bundles on $\bbC\bbP^1$ with $\bbC^\times$-actions}
Math. Proc. Cambridge Philos. Soc.171(2021), no.3, 623--656.
%\emph{From $S^1$-fixed points to $\mathcal{W}$-algebra representations}.
%\arxiv{1709.06142}.
%17 citations counted in INSPIRE as of 17 Feb 2023


%\cite{FPYY}
\bibitem{FPYY}
L.~Fredrickson, D.~Pei, W.~Yan and K.~Ye,
\emph{Argyres-Douglas theories, Chiral algebras and Wild Hitchin characters}.
JHEP \textbf{01}, 150 (2018)
%doi:10.1007/JHEP01(2018)150
%[arXiv:1701.08782 [hep-th]].
%46 citations counted in INSPIRE as of 17 Feb 2023


%\cite{Gai}
%\bibitem{Gai}
%D.~Gaiotto,
%\emph{N=2 dualities}.
%JHEP \textbf{08}, 034 (2012)
%doi:10.1007/JHEP08(2012)034
%[arXiv:0904.2715 [hep-th]].
%1100 citations counted in INSPIRE as of 17 Feb 2023

%\cite{GKNPS}
\bibitem{GKNPS}
S.~Gukov, P.~Koroteev, S.~Nawata, D.~Pei and I.~Saberi,
\emph{Branes and DAHA Representations}.
\arxiv{2206.03565}.
%0 citations counted in INSPIRE as of 17 Feb 2023

%\cite{GMN}
\bibitem{GMN}
D.~Gaiotto, G.~W.~Moore and A.~Neitzke,
\emph{Wall-crossing, Hitchin systems, and the WKB approximation}.
Advances in Mathematics 234 (2013): 239-403
%doi:10.1016/j.aim.2012.09.027
%[arXiv:0907.3987 [hep-th]].
%797 citations counted in INSPIRE as of 17 Feb 2023

\bibitem{H}
T.~Haines, \emph{Dualities for Root Systems with Automorphisms and Applications to Non-Split Groups}. Representation Theory of the American Mathematical Society 22, no. 1 (2018): 1--26. 


\bibitem{HR}
T.~Haines and M.~Rapoport. \emph{Appendix: On Parahoric Subgroups}. Advances in Mathematics 219, no.1 (2008): 188--198.


\bibitem{Kac}
V.~Kac. \emph{Infinite dimensional Lie algebras}.
Cambridge University Press, Cambridge, 1990.

\bibitem{KRW}
V.~Kac, S.~S.~Roan and M.~Wakimoto. 
\emph{Quantum Reduction for Affine Superalgebras.} Communications in Mathematical Physics 241, no. 2--3 (2003): 307--42. %https://doi.org/10.1007/s00220-003-0926-1.


\bibitem{KW89} V.~Kac and M.~Wakimoto. 
\emph{Classification of Modular Invariant Representations of Affine Algebras}. 
In \emph{Infinite-dimensional Lie algebras and groups}, Advanced Series in Mathematical Physics 7 (1989): 138-177. %World Sci. Publ., Teaneck, NJ, 1989

\bibitem{KW07} V.~Kac and M.~Wakimoto. \emph{On Rationality of W-Algebras}.
Transformation Groups 13, no. 3-4 (2008): 671-713.

\bibitem{KW16} V.~Kac and M.~Wakimoto.  \emph{A Remark on Boundary Level Admissible Representations}. 
C. R. Math. Acad. Sci. Paris 355(2017), no.2, 128--132.  %http://arxiv.org/abs/1612.07423.



\bibitem{KL}

D.~Kazhdan and G.~Lusztig. \emph{Fixed Point Varieties on Affine Flag Manifolds}, Israel Journal of Mathematics 62, no. 2 (1988): 129--168.


%\cite{KSY}
\bibitem{KSY}
C.~Kozcaz, S.~Shakirov and W.~Yan,
\emph{Argyres-Douglas Theories, Modularity of Minimal Models and Refined Chern-Simons}.
Adv. Theor. Math. Phys. 26(2022), no.3, 643--672.
%12 citations counted in INSPIRE as of 17 Feb 2023


\bibitem{SXY}
P.~Shan, D.~Xie and W.~Yan,
\emph{Mirror Symmetry for Circle Compactified 4d $\mathcal{N}=2$ SCFTs}, 
\arxiv{2306.15214}.


\bibitem{S}
E.~Sommers. \emph{A Family of Affine Weyl Group Representations}. Transformation Groups 2, no. 4 (1997): 375--90. %https://doi.org/10.1007/BF01234541.

%\cite{SXY}
%\bibitem{SXY}
%J.~Song, D.~Xie and W.~Yan,
%\emph{Vertex operator algebras of Argyres-Douglas theories from M5-branes}.
%JHEP \textbf{12}, 123 (2017)
%doi:10.1007/JHEP12(2017)123
%[arXiv:1706.01607 [hep-th]].
%54 citations counted in INSPIRE as of 17 Feb 2023

\bibitem{OY}
A.~Oblomkov and Z.~Yun. \emph{Geometric Representations of Graded and Rational Cherednik Algebras}. Advances in Mathematics 292 (2016): 601--706. %https://doi.org/10.1016/j.aim.2016.01.015.


\bibitem{VV}
M.~Varagnolo and E.~Vasserot. \emph{Finite-Dimensional Representations of DAHA and Affine Springer Fibres: The Spherical Case}. Duke Mathematical Journal 147, no. 3 (2009): 439--540. %https://doi.org/10.1215/00127094-2009-016.

\bibitem{V}
E.~Vasserot. \emph{Induced and Simple Modules of Double Affine Hecke Algebras}. Duke Mathematical Journal 126, no. 2 (2005): 251--323.




\end{thebibliography}
\end{document}